\theoremstyle{plain}
\newtheorem{lem}{Lemma}[section]
\newtheorem{thm}{Theorem}[section]
\theoremstyle{definition}
\newtheorem{defn}{Definition}[section]
\theoremstyle{remark}
\newcommand{\cK}{{\mathcal K}}
\newcommand{\cT}{{\mathcal T}}
\newcommand{\mbR}{{\mathbb R}}
\newcommand{\mbC}{{\mathbb C}}
\newcommand{\vk}{\varkappa}
\newcommand{\vf}{\varphi}
\newcommand{\ve}{\varepsilon}
\newcommand{\wt}{\widetilde}
\renewcommand{\ker}{\mathop{\rm ker}}
\renewcommand{\Re}{\mathop{\rm Re}}
\newcommand{\ov}{\overline}
\newcommand{\1}{1\!\!\,{\rm I}}
\begin{document}
\large
\begin{center}
{\Large\bf
Moments estimates for local times of one class of Gaussian processes}
\end{center}

{\bf author:} Olga Izyumtseva

{\bf address:} Institute of Mathematics,
National Academy of Sciences of Ukraine, Kiev-4, 01601,
3, Tereschenkivska st, Ukraine

{\bf email:} olaizymtseva@yahoo.com

{\bf 2010 Mathematics Subject Classification:} Primary 60G15, 60J55; Secondary 60H40

{\bf keywords:} Local time, Gaussian integrators, Brownian bridge, white noise, Gram determinant, generalized functionals, Fourier-Wiener transform

{\bf Abstract}. In present paper we prove an existence and give a moments estimate for the local time of Gaussian integrators. Every Gaussian integrator is associated with a continuous linear operator in the space of square integrable functions via white noise representation. Hence, all properties of such process are completely characterized by properties of the corresponding operator. We describe the sufficient conditions on continuous linear non-invertible operator which allow the local time of the integrator to exists at any real point. Moments estimate for local time is obtained. A continuous dependence of local time of Gaussian integrators on generating them operators is established. The received statement improves our result presented in \cite{1}.
\section{Introduction}

In present paper we study properties of local times \cite{16} for Gaussian integrators. This class of Gaussian processes firstly was introduced by A.A.Dorogovtsev in the work \cite{2}. The original definition is the following.
\begin{defn}\cite{2}
\label{defn1.1}
A centered Gaussian process $x(t),\ t\in[0;1],\  x(0)=0$ is said to be an integrator if there exists the constant $c>0$ such that for an arbitrary partition $0=t_0<t_1<\ldots<t_n=1$ and real numbers $a_0, \ldots,a_{n-1}$ the following relation holds
$$
E\Big(\sum^{n-1}_{k=0}a_k(x(t_{k+1})-x(t_k))\Big)^2\leq c\sum^{n-1}_{k=0}a_k\Delta t_k.
$$
\end{defn}
This inequality allows to define a stochastic integral for any square integrable function with respect to integrator. Stochastic calculus for\newline integrators including the Ito formula and stochastic anticipating equations were introduced in \cite{2,3}. The following statement describes the structure of integrators using the notion of white noise in the space of square integrable functions.
\begin{lem}\cite{2}
\label{lem1.2}
 The centered Gaussian process $x(t),\ t\in[0; 1]$ is the integrator iff there exist a white noise $\xi$ [5-7] in the space $L_2([0; 1])$ and a continuous linear operator in the same space such that
 $$
 x(t)=(A\1_{[0;1]}, \xi), \ t\in[0; 1].
 $$
  \end{lem}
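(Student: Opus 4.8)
\medskip\noindent\textbf{Proof proposal.} The implication ``$\Leftarrow$'' is the routine one. Assuming $x(t)=(A\1_{[0;t]},\xi)$ for a white noise $\xi$ on $L_2([0;1])$ and a bounded operator $A$, I would compute, for any partition $0=t_0<\dots<t_n=1$ and reals $a_0,\dots,a_{n-1}$,
$$
E\Big(\sum_{k}a_k(x(t_{k+1})-x(t_k))\Big)^2 = \Big\|A\sum_k a_k\1_{[t_k;t_{k+1}]}\Big\|_{L_2}^2 \le \|A\|^2\sum_k a_k^2\,\Delta t_k,
$$
using that $\xi$ acts isometrically on $L_2([0;1])$ and that the functions $\1_{[t_k;t_{k+1}]}$ are pairwise orthogonal with $\|\1_{[t_k;t_{k+1}]}\|^2=\Delta t_k$; so Definition~\ref{defn1.1} holds with $c=\|A\|^2$.

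For ``$\Rightarrow$'' the plan is to convert the integrator inequality into a bounded operator. Let $\mathcal S\subset L_2([0;1])$ be the linear span of the indicators $\1_{[0;t]}$, $t\in[0;1]$; it coincides with the space of step functions and is dense in $L_2([0;1])$. Define $T\colon\mathcal S\to L_2(\Omega,\mathbb P)$ on generators by $T\1_{[0;t]}=x(t)$ and extend linearly, checking that the value on a step function is independent of the representing partition (refinement argument). Writing a step function adapted to a partition as $f=\sum_k a_k\1_{[t_k;t_{k+1}]}$, Definition~\ref{defn1.1} says precisely $E(Tf)^2\le c\sum_k a_k^2\,\Delta t_k=c\,\|f\|_{L_2}^2$, so $T$ is bounded with $\|T\|\le\sqrt c$ and extends to a bounded linear map $T\colon L_2([0;1])\to L_2(\Omega,\mathbb P)$ whose range lies in the Gaussian Hilbert space generated by $\{x(t)\}$ (a closed linear subspace of $L_2(\Omega,\mathbb P)$ consisting of centered Gaussian variables).

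Next I would recover the operator from the covariance of $T$. The form $[f,g]:=E(Tf)(Tg)$ is a bounded, symmetric, nonnegative bilinear form on $L_2([0;1])$ (bounded by $\|T\|^2$), so by Riesz representation there is a bounded nonnegative self-adjoint operator $B$ with $[f,g]=\lg Bf,g\rg$; put $A:=B^{1/2}$, which is again bounded, so that $E(Tf)(Tg)=\lg Af,Ag\rg$ for all $f,g\in L_2([0;1])$. Finally take a white noise $\xi$ on $L_2([0;1])$ (on an enlarged probability space if necessary). Then $\{(Af,\xi):f\in L_2([0;1])\}$ and $\{Tf:f\in L_2([0;1])\}$ are centered Gaussian families with the same covariance $\lg Af,Ag\rg$, hence the same finite-dimensional distributions; specializing to $f=\1_{[0;t]}$ gives that $x(t)$ and $(A\1_{[0;t]},\xi)$ coincide in law, which is the asserted representation. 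If one wants equality rather than equality in law, one factors $T=WA$ through the isometry $Af\mapsto Tf$ from $\ov{\mathrm{Ran}\,A}$ onto the Gaussian space of $x$, then extends $W$ to a white noise on all of $L_2([0;1])$, again after a possible enlargement of $\Omega$.

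I expect the only delicate point to be this factorization $T=(\text{white noise})\circ A$: since $T$ need not be an isometry one cannot simply read $\xi$ off from $x$, and it is the square-root (equivalently, polar) construction of $A$, together with the enlargement of the probability space, that makes the white-noise representation work. The boundedness and well-definedness of $T$, by contrast, are immediate from Definition~\ref{defn1.1} and the density of step functions.
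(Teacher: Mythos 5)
The paper quotes this lemma from \cite{2} without proof, so there is nothing internal to compare against; your argument is correct and is the standard route to the result: the integrator inequality makes $f\mapsto Tf$ a well-defined bounded map from step functions into the Gaussian space of $x$, the covariance form gives a bounded nonnegative $B$ and $A=B^{1/2}$, and the isometry $Af\mapsto Tf$ on $\ov{\mathrm{Ran}\,A}$ extends to a white noise $\xi$ after adjoining an independent Gaussian white noise on $\ker A$ (enlarging the probability space), which is exactly the delicate point you flag and resolve. You also silently and correctly read $\1_{[0;t]}$ for the misprinted $\1_{[0;1]}$ in the statement and $a_k^2\Delta t_k$ for $a_k\Delta t_k$ in Definition~\ref{defn1.1}, without which neither implication would make sense.
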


It follows from Lemma \ref{lem1.2} that all properties of the integrator $x$ can be characterized in terms of the operator $A.$ Note that in this case the covariance function of the process $x$ has the form
$$
Ex(t)x(s)=(A1_{[0;t]},A1_{[0;s]}).
$$
The main object of investigation in the present paper is the local time for Gaussian integrators. Let us start from the general definition of local time for a 1-dimensional random process $x(t),\ t\in[0; 1].$ Denote by
$$
f_\ve(z)=\frac{1}{\sqrt{2\pi\ve}}e^{-\frac{z^2}{2\ve}}.
$$
\begin{defn}
\label{defn1.3}
For $t\in[0; 1],\ u\in\mbR$ and $p\geq 2$
$$
l^{x}(u,t)=\int^t_0\delta_u(x(s))ds=L_p\mbox{-}\lim_{\ve\to0}\int^t_0f_\ve(x(s)-u)ds
$$
is said to be the local time of the process $x$ at the point $u$ up to the time $t$ whenever the limit exists.
\end{defn}
Denote by $l^{x}=l^{x}(0,1),\ l^{x}(u)=l^{x}(u,1).$
Let $G(e_1, \ldots, e_n)$ be the Gram determinant constructed from elements $e_1,\ldots, e_n$ of the space $L_2([0; 1]).$ Denote by $g(t)=A\1_{[0;t]}.$ In Section 2 we show that an existence of local time follows from the convergence of the following integral
\begin{equation}
\label{eq1.1}
\int_{\Delta_p}
\frac{d\vec{s}}
{\sqrt{G(g(s_1), \ldots, g(s_p)}},
\end{equation}
where $\Delta_p=\{0\leq s_1\leq\ldots\leq s_p\},\ p\geq 2.$
Note that in the case $p=2$ one can use the following statement which was proved in \cite{15}. Let $(T,\Xi)$ be a measurable space with the finite measure $\nu.$ Consider a centered Gaussian random field $\zeta$ on $(T,\Xi).$ Denote by $B(s,t)$ the covariance matrix of the vector $(\zeta(s),\zeta(t)).$ Put $I_{\ve}=\int_{T}f_{\ve}(\zeta(t))\nu(dt).$
\begin{thm}\cite{15}
\label{thm3}
Suppose that
\begin{equation}
\label{eq10}
\int_{T}\int_{T}\frac{1}{\sqrt{det B(s,t)}}\nu(ds)\nu(dt)<\infty,
\end{equation}
then the limit of $I_{\ve}$ exists in $L_{2}(\Omega),$ as $\ve\to0.$
\end{thm}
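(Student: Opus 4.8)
\medskip
\noindent\textbf{Sketch of the intended argument.}
The plan is to show that the family $\{I_\ve\}_{\ve>0}$ is a Cauchy net in $L_2(\Omega)$ as $\ve\to0$; since $L_2(\Omega)$ is complete, the limit then exists. The key is to evaluate the mixed second moment $E(I_\ve I_\delta)$ for two parameters $\ve,\delta>0$ explicitly. Using the Fourier identity $f_\ve(z)=\frac{1}{2\pi}\int_{\mbR}e^{i\lambda z-\ve\lambda^2/2}\,d\lambda$, Tonelli's theorem to move $E$ inside the integral over $T\times T$ (legitimate since $f_\ve,f_\delta\ge0$), and the standard Gaussian computation of characteristic functions, one obtains
\begin{equation*}
E(I_\ve I_\delta)=\int_T\int_T E\big(f_\ve(\zeta(s))f_\delta(\zeta(t))\big)\,\nu(ds)\,\nu(dt)
=\frac{1}{2\pi}\int_T\int_T\frac{\nu(ds)\,\nu(dt)}{\sqrt{\det\big(B(s,t)+\mathrm{diag}(\ve,\delta)\big)}},
\end{equation*}
where the inner identity rests on $\frac{1}{(2\pi)^2}\int_{\mbR^2}\exp\big(-\tfrac12\lg(C+\mathrm{diag}(\ve,\delta))\theta,\theta\rg\big)\,d\theta=\frac{1}{2\pi\sqrt{\det(C+\mathrm{diag}(\ve,\delta))}}$ applied with $C=B(s,t)$ and $\theta=(\lambda,\mu)$.

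Next I would let $\ve,\delta\to0$ under the last integral sign by dominated convergence. Since $\mathrm{diag}(\ve,\delta)$ is positive semidefinite we have $B(s,t)\preceq B(s,t)+\mathrm{diag}(\ve,\delta)$, and the determinant is nondecreasing in the Loewner order on positive semidefinite matrices, so $\det\big(B(s,t)+\mathrm{diag}(\ve,\delta)\big)\ge\det B(s,t)$; hence the integrand is bounded above by $1/\sqrt{\det B(s,t)}$ uniformly in $\ve,\delta$, and this majorant is $\nu\otimes\nu$-integrable precisely by hypothesis \eqref{eq10}. That same hypothesis forces $\det B(s,t)>0$ for $\nu\otimes\nu$-almost every $(s,t)$, and at each such point $\det\big(B(s,t)+\mathrm{diag}(\ve,\delta)\big)\to\det B(s,t)$. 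Therefore
\begin{equation*}
\lim_{\ve,\delta\to0}E(I_\ve I_\delta)=\frac{1}{2\pi}\int_T\int_T\frac{\nu(ds)\,\nu(dt)}{\sqrt{\det B(s,t)}}=:J<\infty,
\end{equation*}
the limit holding as $\ve$ and $\delta$ tend to $0$ independently. Taking $\delta=\ve$ also gives $E(I_\ve^2)\to J$, whence $E(I_\ve-I_\delta)^2=E(I_\ve^2)-2E(I_\ve I_\delta)+E(I_\delta^2)\to J-2J+J=0$, which is the Cauchy property, and the proof is complete.

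The remaining, purely routine, points are the joint measurability of $(s,t)\mapsto B(s,t)$ and of $(s,t)\mapsto\det B(s,t)$, needed for the double integrals and for Tonelli/Fubini, and the honest justification of the two interchanges of integration (one by nonnegativity, one by absolute integrability of $e^{-\ve\lambda^2/2}$ in the complex Fubini step). I expect the only genuinely substantive step to be the uniform domination of the integrand by $1/\sqrt{\det B(s,t)}$ together with the observation that \eqref{eq10} automatically confines the degeneracy set $\{\det B=0\}$ to a $\nu\otimes\nu$-null set; the determinant inequality itself is elementary (e.g.\ from $\det(A+D)=\det A\cdot\det(I+A^{-1/2}DA^{-1/2})\ge\det A$ when $A$ is nonsingular, and trivially when $A$ is singular). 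So the main obstacle here is careful bookkeeping rather than any deep new idea.
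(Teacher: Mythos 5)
Your argument is correct, and it is essentially the same moment-computation-plus-dominated-convergence technique that this paper itself uses: the statement is quoted from \cite{15} without proof here, but the proof of Theorem \ref{thm2.1} carries out exactly your scheme in the more general $L_p$ setting (mixed moments $E(V_{\ve_1})^l(V_{\ve_2})^{p-l}$ expressed as integrals of $\det(B_\ve(s)+I\ve_1)^{-1/2}$, dominated by the reciprocal square root of the Gram determinant, then Lebesgue's theorem). Your $p=2$ specialization, including the lower bound $\det(B+\mathrm{diag}(\ve,\delta))\ge\det B$ and the observation that the hypothesis forces $\det B>0$ almost everywhere, is sound.
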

Therefore, our statement generalizes Theorem 1.1 for integrators in the case of $L_p-$ convergence for $p>2.$ It must be mentioned that for a $1$-dimensional Wiener process P. Levy in \cite{17} proved the existence of local time. T. Trotter in \cite{18} using Kolmogorov's continuity criterion proved that the local time of the Wiener process is jointly continuous with respect to time and space variable. For general Gaussian process S. Berman in \cite{19} proposed the notion of local nondeterminism which in some sense means almost independency of increments on small intervals. Under some technical assumption this property leads to existence of local times and their joint continuity. The local time of anisotropic Gaussian random fields studied by Y. Xiao in \cite{20}. Typical examples of such fields are fractional Brownian sheets, operator-scaling Gaussian fields with stationary increments and the solution to the stochastic heat equation. By representing the notion of strong local nondeterminism Y. Xiao proved the existence of local times and their joint continuity for anisotropic Gaussian random fields. The Gaussian integrators which we consider in the present paper have some version of local nondeterminism only if they generated by a continuously invertible operator. Using this fact, for integrator generated by a continuously invertible operator in the work \cite{7} we prove the existence of local time which is jointly continuous in time and space variable. The key idea of the proof is the following statement.
\begin{lem}\cite{7}
\label{lem1.4}
Suppose that $A$ is a continuously invertible operator in the Hilbert space $H.$ Then for all $k\geq1$ there exists a positive constant $c(k)$ which depends on $k$ and $A$ such that for any $e_1, \ldots, e_k\in H$ the following relation holds
$$
G(Ae_1, \ldots, Ae_k)\geq c(k)G(e_1, \ldots, e_k).
$$
\end{lem}

In this paper we will check that $c(k)=\frac{1}{\|A^{-1}\|^{2k}}$ (see Theorem \ref{thm2.3}).
It follows from Lemma \ref{lem1.4} that \eqref{eq1.1} can be estimated by
$$
\int_{\Delta_p}
\frac{d\vec{t}}
{\sqrt{G(\1_{[0;t_1]}, \ldots, \1_{[0; t_p]})}}=
\frac{(2\pi)^{p/2}}{p!}E(l^w)^p,
$$
where $w(t),\ t\in[0; 1]$ is a 1-dimensional Wiener process. In the paper the same integral with the Gram determinant constructed by elements $A1_{[t_i;t_{i+1}]},\ i=\overline{1,p-1}$, where the operator $A$ is non-invertible will be estimated by the moments of local time of Brownian bridge. Let $\ker A$ be the kernel of operator $A$ and $(\ker A)^\perp=\{y\in L_2([0;1]): \forall \ z\in\ker A:\  (y,z)=0\}.$ Denote by $L$ the subspace of all step functions in $\ker A.$ Let $\{f_k,\ k=\ov{1,s}\}$ be an orthonormal basis in $L.$ Suppose that $0\leq s_1<\ldots<s_N\leq1$ are the points of jumps of functions $f_1, \ldots, f_s.$ Note that any other orthonormal basis in $L$ has the same set of the points of jumps. Really, let $\tilde{f_1},\ldots,\tilde{f_s}$ be also the orthonormal basis in $L.$ Denote by $K,\ \tilde{K}$ the sets of points of jumps of functions $\{f_j,\ j=\ov{1,s}\}$ and $\{\tilde{f_j},\ j=\ov{1,s}\}$ correspondingly. Then
\begin{equation}
\label{eq1.3}
f_j=\sum^{s}_{k=1}(f_j,\tilde{f_k})\tilde{f_k},\ j=\ov{1,s}
\end{equation}
and
\begin{equation}
\label{eq1.4}
\tilde{f_j}=\sum^{s}_{k=1}(\tilde{f_j},f_k)f_k,\ j=\ov{1,s}.
\end{equation}
It follows from (1.3), (1.4) that $K\subset\tilde{K}$ and $\tilde{K}\subset K$. Consequently, $K=\tilde{K}.$ Consider independent 1-dimensional  Wiener processes  $w_1,\ldots,w_{N+1}$. Let us define the process
$y(t),\ t\in[0;1]$ via the operator $A$ and $w_1,\ldots,w_{N+1}$ as follows
\begin{equation}
y(t)=
\begin{cases}
w_1(t)-\frac{t}{s_1}w_1(s_1),&t\in[0;s_1]\\
w_2(t-s_1)-\frac{t-s_1}{s_2-s_1}w_2(s_2-s_1),&t\in[s_1;s_2]\\
\cdot\\
\cdot\\
\cdot\\
w_{N+1}(t-s_N)-\frac{t-s_N}{1-s_N}w_{N+1}(1-s_N),&t\in[s_N;1].
\end{cases}
\end{equation}
Denote by $\tilde{A}$ the restriction of operator $A$ on $(\ker A)^\perp$.
In present paper we will show that for the continuous linear operator $A$ which satisfies the conditions

1) $\dim\ker A<+\infty$

2) the operator $\tilde{A}$ is continuously invertible

the following relation holds
$$
\int_{\Delta_p}
\frac{d\vec{t}}
{\sqrt{G(g(t_1), \ldots, g(t_p))}}\leq \frac{1}{p!\ \|\tilde{A}^{-1}\|^{p} }E(l^{y})^p,\ p\geq 1.
$$
The obtained estimate allows to prove the existence of local time and give a moments estimate for the local time of integrator generated by the operator $A$ which satisfies the conditions 1), 2).

This paper is organized as follows. In Section 2 we discuss sufficient conditions on a continuous linear non-invertible operator that allow the local time of corresponding integrator to exists at any real point. Moments estimate for local time is introduced. In Section 3 we expand our result proved in \cite{1} related to a continuous dependence of local time of integrators on generating them  operators.

\section{Existence of local time for Gaussian integrators. Moments estimate for local time}

In present section we prove an existence of local time and give a moments estimate for a local time of integrator generated by a continuous linear operator in the space $L_2([0; 1])$ with a finite-dimensional kernel. Consider the Gaussian integrator
$$
x(t)=(A\1_{[0;t]}, \xi), \ t\in[0;1],
$$
where $A$ is the continuous linear operator in the space $L_2([0;1]).$ As in Section 1 $\tilde{A}$ is the restriction of operator $A$ to $(\ker A)^\perp.$

\begin{thm}
\label{thm2.1}
Suppose that the continuous linear operator $A$ in the space $L_2([0; 1])$ satisfies conditions

1) $\dim\ker A<+\infty;$

2) the operator  $\tilde{A}$ is continuously invertible.

Then for any $u\in\mbR$ and $t\in(0;1]$ the Gaussian integrator $x$ generated by the operator $A$ has the local time $l(u, t).$ Moreover, for any $p\geq 1$ the following relation holds
$$
E(l^{x})^p\leq \frac{1}{p!\ \|\tilde{A}^{-1}\|^{p} }E(l^{y})^p,
$$
where the process $y$ defined in (1.5).
\end{thm}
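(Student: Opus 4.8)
The plan is to reduce the existence of $l^x(u,t)$ and the moment bound to the convergence of the Gram-determinant integral \eqref{eq1.1}, and then to estimate that integral by the corresponding one for the process $y$. First I would recall (from Section 2, which I may use) that for a Gaussian integrator $x$ the $L_p$-convergence of $\int_0^t f_\ve(x(s)-u)\,ds$, and hence the existence of $l^x(u,t)$, follows from finiteness of $\int_{\Delta_p}\big(G(g(s_1),\dots,g(s_p))\big)^{-1/2}\,d\vec s$, together with the identity relating the $p$-th moment of $l^x$ to this integral up to the factor $(2\pi)^{-p/2}p!$. So the whole theorem comes down to proving
\[
\int_{\Delta_p}\frac{d\vec t}{\sqrt{G(g(t_1),\dots,g(t_p))}}\;\le\;\frac{1}{p!\,\|\tilde A^{-1}\|^{p}}\,E(l^{y})^p,\qquad p\ge 1,
\]
which is precisely the estimate announced in the introduction.

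The key algebraic step is to compare $G(g(t_1),\dots,g(t_p))$ with a Gram determinant for the "bridge" building blocks. Since $g(t_i)=A\1_{[0;t_i]}$ and the Gram determinant is invariant under the column operations $g(t_i)\mapsto g(t_i)-g(t_{i-1})=A\1_{[t_{i-1};t_i]}$, on $\Delta_p$ we have $G(g(t_1),\dots,g(t_p))=G\big(A\1_{[0;t_1]},A\1_{[t_1;t_2]},\dots,A\1_{[t_{p-1};t_p]}\big)$. Now decompose each indicator $\1_{[t_{i-1};t_i]}$ along the orthogonal splitting $L_2([0;1])=\ker A\oplus(\ker A)^\perp$: the component in $\ker A$ is killed by $A$, so $A\1_{[t_{i-1};t_i]}=\tilde A P\1_{[t_{i-1};t_i]}$, where $P$ is the orthogonal projection onto $(\ker A)^\perp$. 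Applying Lemma~\ref{lem1.4} to the continuously invertible operator $\tilde A$ on the Hilbert space $(\ker A)^\perp$, with the claim $c(p)=\|\tilde A^{-1}\|^{-2p}$ (Theorem~\ref{thm2.3}), gives
\[
G\big(A\1_{[0;t_1]},\dots,A\1_{[t_{p-1};t_p]}\big)\;\ge\;\frac{1}{\|\tilde A^{-1}\|^{2p}}\,G\big(P\1_{[0;t_1]},\dots,P\1_{[t_{p-1};t_p]}\big).
\]
The final identification is that $G\big(P\1_{[0;t_1]},P\1_{[t_1;t_2]},\dots,P\1_{[t_{p-1};t_p]}\big)$ equals the Gram determinant of the increments of the process $y$ defined in (1.5): indeed $P$ is exactly orthogonal projection onto the complement of $L$, the span of step functions vanishing at the jump points $s_1,\dots,s_N$, so $P\1_{[t_{i-1};t_i]}$ is the indicator with its mean removed on each sub-bridge-interval, whose covariance structure is that of $y$. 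Hence $G(g(t_1),\dots,g(t_p))\ge \|\tilde A^{-1}\|^{-2p}\,\det\!\big(\mathrm{Cov}(y(t_i)-y(t_{i-1}))_{i}\big)$, and integrating the reciprocal square root over $\Delta_p$ and invoking the moment identity for $l^y$ yields the claimed bound; finiteness of $E(l^y)^p$ (which holds because $y$ is a concatenation of Brownian bridges, each of which has local time with all moments finite) then also gives the existence of $l^x(u,t)$ for every $u\in\mbR$, $t\in(0;1]$.

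The main obstacle, I expect, is the reduction step $A\1_{[t_{i-1};t_i]}=\tilde A P\1_{[t_{i-1};t_i]}$ and the matching identity for $y$: one must be careful that $\ker A$ may contain functions that are not step functions, yet only the step functions in $\ker A$ (the subspace $L$ with jump points $s_1,\dots,s_N$) affect the Gram determinant of indicator increments, since $\1_{[t_{i-1};t_i]}$ is a step function and its inner product with any $L_2$ function orthogonal to all step functions supported appropriately contributes nothing new — this needs the observation that projecting onto $(\ker A)^\perp$ versus projecting onto $L^\perp$ gives the same result when applied to step functions whose only relevant "defect directions" live in $L$. Making this precise, i.e. showing $P\1_{[t_{i-1};t_i]}$ has exactly the covariance of the increments of $y$, is the technical heart; the rest is the invariance of Gram determinants under column operations, Lemma~\ref{lem1.4} with the explicit constant, and the standard moment formula for local time of the bridge-concatenation $y$.
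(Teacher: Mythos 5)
Your reduction to the Gram--determinant integral, the rewriting $g(t_i)=A(I-P)\1_{[0;t_i]}$ with $P$ the projection onto $\ker A$, and the application of Theorem~\ref{thm2.3} to the continuously invertible restriction $\tilde A$ all match the paper's argument and are sound. The genuine gap is in your ``final identification'': you assert that $G\bigl(P\1_{[0;t_1]},P\1_{[t_1;t_2]},\dots\bigr)$ (your $P$ being the projection onto $(\ker A)^\perp$) \emph{equals} the Gram determinant of the increments of $y$, on the grounds that projecting a step function onto $(\ker A)^\perp$ is the same as projecting it onto the orthogonal complement of $L$, the step-function part of $\ker A$. That is false in general: $\ker A$ may contain elements $e_1,\dots,e_m$ that are not step functions yet are not orthogonal to the indicators $\1_{[t_{i-1};t_i]}$ (take $\ker A$ spanned by a single smooth function), so the two projections genuinely differ; moreover even $L$ itself need only be \emph{contained in} the span of $\1_{[0;s_1]},\dots,\1_{[0;s_N]}$, not equal to it. Hence no equality of covariances with the increments of $y$ is available, and the step you yourself call ``the technical heart'' is left without an argument --- the sketch you give for it would not close it.

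What the paper does at this point (Appendix A) is prove a chain of \emph{inequalities} with positive constants rather than an identity. Lemma A.3 rewrites $G((I-P)\1_{[0;t_1]},\dots,(I-P)\1_{[0;t_k]})$ as the augmented Gram determinant $G(\1_{[0;t_1]},\dots,\1_{[0;t_k]},q_1,\dots,q_p)$ over an orthonormal basis of $\ker A$. Lemma A.5 then discards the non-step basis vectors $e_1,\dots,e_m$ at the cost of a constant $c>0$; the positivity of $c$ is the delicate point, since one must rule out approximating $e_m$ by combinations of indicators and elements of $L$, and this rests on the closedness of the set of step functions with a bounded number of jumps (Lemma A.4). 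Finally Lemma A.6 replaces $f_1,\dots,f_s$ by $\1_{[0;s_1]},\dots,\1_{[0;s_N]}$ with a further constant $c_{\vec s}$, and only then does the Gram determinant governing the bridge concatenation $y$, and hence $E(l^y)^p$, appear. To repair your proposal you would need to import precisely these three lemmas (or equivalents); the column-operation step and the appeal to Theorem~\ref{thm2.3} alone do not suffice.
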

\renewcommand{\proofname}{Proof}
\begin{proof}
Without loss of generality we can assume that $u=0,\ t=1.$ Consider
$$
f_\ve(z)=\frac{1}{\sqrt{2\pi\ve}}e^{-\frac{z^2}{2\ve}},\ \ve>0,\ z\in\mbR
$$
and check that for any $p\in \mathbb{N}$ there exists
$$
L_p\mbox{-}\lim_{\ve\to0}
\Big[\int^1_0f_\ve(x(s))ds\Big]=:l^x.
$$
By $V_\ve$ denote $\int^1_0f_\ve(x(s))ds.$ Let us check that for any $p\geq1$
\begin{equation}
\label{eq2.1}
E(V_{\ve_1}-V_{\ve_2})^p\to0, \ \ve_1, \ve_2\to0.
\end{equation}
Note that
$$
E(V_{\ve_1}-V_{\ve_2})^p=\sum^p_{l=0}C^l_p(-1)^{p-l}E(V_{\ve_1})^l(V_{\ve_2})^{p-l}.
$$
Hence, to check the relation \eqref{eq2.1} it suffices to prove that there exists the finite limit $E(V_{\ve_1})^l(V_{\ve_2})^{p-l}$ as $\ve_1, \ve_2\to0.$ Using the relation
$$
f_{\ve_2}(z)=\sqrt{\frac{\ve_1}{\ve_2}}f_{\ve_1}\Big(\sqrt{\frac{\ve_1}{\ve_2}}z\Big)
$$
one can get
\begin{equation}
\label{eq2.2}
E(V_{\ve_1})^l(V_{\ve_2})^{p-l}=p!\Big(\frac{\ve_1}{\ve_2}\Big)^{\frac{p-l}{2}}
\int_{\Delta_p}E\wt{f}_{\ve_1}(\wt{x}_\ve(s))d\vec{s},
\end{equation}
where
$$
\wt{f}_\ve(z)=\frac{1}{(2\pi\ve)^{p/2}}e^{-\frac{\|z\|^2}{2\ve}},\ \ve>0,\  z\in\mbR^p,
$$
$$
\wt{x}_{\ve}(s)=\begin{pmatrix}
x(s_1)\\
\vdots\\
x(s_l)\\
\sqrt{\frac{\ve_1}{\ve_2}}x(s_{l+1})\\
\vdots\\
\sqrt{\frac{\ve_1}{\ve_2}}x(s_{p})
\end{pmatrix}.
$$
Calculating the expectation of the integrand in \eqref{eq2.2} one can obtain that \eqref{eq2.2} equals
\begin{equation}
\label{eq2.3}
p!\Big(\frac{\ve_1}{\ve_2}\Big)^{\frac{p-l}{2}}
\frac{1}{(2\pi)^{p/2}}
\int_{\Delta_p}
\frac{1}{\sqrt{\det(B_{\ve}(s)+I\ve_1)}}d\vec{s},
\end{equation}
where $B_{\ve}(s)$ is the covariance matrix of the vector $\wt{x}_{\ve}.$ Let $G_k(s)$ be a sum of all possible Gram determinants constructed from $k$ elements of the set
$$
\Big\{g(s_1), \ldots, g(s_l),\sqrt{\frac{\ve_1}{\ve_2}}g(s_{l+1}), \ldots, \sqrt{\frac{\ve_1}{\ve_2}}g(s_p)\Big\}.
$$
 Note that
$$
G_p(s)=G\Big(g(s_1), \ldots, g(s_l),\sqrt{\frac{\ve_1}{\ve_2}}g(s_{l+1}), \ldots, \sqrt{\frac{\ve_1}{\ve_2}}g(s_p)\Big).
$$
Using a linearity property of a determinant one can conclude that
\begin{equation}
\label{eq2.4}
\det(B_{\ve}(s)+I\ve_1)=\sum^p_{k=0}\ve^{p-k}_1G_k(s).
\end{equation}
Here $G_0(\vec{s})=1.$ It follows from relation \eqref{eq2.4} that \eqref{eq2.3} equals
$$
p!\Big(\frac{\ve_1}{\ve_2}\Big)^{\frac{p-l}{2}}
\frac{1}{(2\pi)^{p/2}}
\int_{\Delta_p}
\frac{d\vec{s}}{\sqrt{\sum^p_{k=0}\ve^{p-k}_1G_k(s)}}=
$$
$$
=p!
\frac{1}{(2\pi)^{p/2}}
\int_{\Delta_p}
\frac{d\vec{s}}{\sqrt{G(g(s_1), \ldots,g(s_p))+\Big(\frac{\ve_2}{\ve_1}\Big)^{p-l}\sum^p_{k=0}\ve^{p-k}_1G_k(s)}}.
$$
Note that for any $\ve_1, \ve_2>0$
$$
\frac{1}{\sqrt{G(g(s_1), \ldots,g(s_p))+\Big(\frac{\ve_2}{\ve_1}\Big)^{p-l}\sum^p_{k=0}\ve^{p-k}_1G_k(s)}}\leq
$$
$$
\leq \frac{1}{\sqrt{G(g(s_1), \ldots,g(s_p))}}
$$
and
$$
\frac{d\vec{s}}{\sqrt{G(g(s_1), \ldots,g(s_p))+\Big(\frac{\ve_2}{\ve_1}\Big)^{p-l}\sum^p_{k=0}\ve^{p-k}_1G_k(s)}}\to
$$
$$
\to
\frac{1}{\sqrt{G(g(s_1), \ldots,g(s_p))}}
$$
as $\ve_1, \ve_2\to0.$ To apply the Lebesgue dominated convergence theorem we  need the following statement which is proved in Appendix A (see Theorem A.2).

\begin{thm}
\label{thm2.2}
Suppose that continuous linear operator $A$ in $L_2([0;1])$ satisfies conditions

1) $\dim\ker A<+\infty;$

2) the operator $\tilde{A}$ is continuously invertible.

Then for any $p\geq1$ the following integral
$$
\int_{\Delta_p}
\frac{d\vec{t}}{\sqrt{G(g(t_1), \ldots,  g(t_p))}}
$$
converges.
\end{thm}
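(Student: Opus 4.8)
The plan is to reduce the convergence of the $p$-dimensional integral over $\Delta_p$ to a statement about the Wiener-bridge-type process $y$ from (1.5), using the key comparison inequality of Lemma \ref{lem1.4} applied to $\tilde A$ rather than to $A$ itself. First I would write $g(t)=A\1_{[0;t]}$ and, since $A$ kills $\ker A$, decompose each indicator: writing $P$ for the orthogonal projection onto $(\ker A)^\perp$, one has $g(t)=\tilde A\, P\1_{[0;t]}$. The Gram determinant $G(g(t_1),\dots,g(t_p))$ is unchanged if we pass from the generating vectors $\1_{[0;t_i]}$ to their increments $\1_{[t_{i-1};t_i]}$ (this is the usual row-operation invariance of Gram determinants, valid on the ordered simplex), so $G(g(t_1),\dots,g(t_p)) = G(\tilde A P\1_{[t_0;t_1]},\dots,\tilde A P\1_{[t_{p-1};t_p]})$ with $t_0:=0$. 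Now apply Lemma \ref{lem1.4} to the continuously invertible operator $\tilde A$ on the Hilbert space $(\ker A)^\perp$, with the constant $c(p)=\|\tilde A^{-1}\|^{-2p}$ promised in the excerpt:
$$
G(g(t_1),\dots,g(t_p)) \;\ge\; \frac{1}{\|\tilde A^{-1}\|^{2p}}\, G\bigl(P\1_{[t_0;t_1]},\dots,P\1_{[t_{p-1};t_p]}\bigr).
$$

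Next I would identify the right-hand Gram determinant with the second-moment structure of the process $y$. The point of the construction (1.5) is precisely that the covariance of the increments of $y$ on the blocks determined by the jump points $s_1,\dots,s_N$ of the step functions in $\ker A$ reproduces $(P\1_{[a;b]},P\1_{[c;d]})$: the Brownian-bridge pieces on each $[s_j;s_{j+1}]$ exactly subtract off the span of the step functions $f_1,\dots,f_s$, which is $L=$ the step-function part of $\ker A$. So $G\bigl(P\1_{[t_0;t_1]},\dots,P\1_{[t_{p-1};t_p]}\bigr)$ equals the Gram determinant (covariance determinant) of the increments $y(t_1)-y(t_0),\dots,y(t_p)-y(t_{p-1})$, which by the same row-operation invariance equals $G$ of the vectors associated to $y(t_1),\dots,y(t_p)$, i.e. $\det$ of the covariance matrix of $(y(t_1),\dots,y(t_p))$. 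Combining with the displayed inequality and integrating over $\Delta_p$,
$$
\int_{\Delta_p}\frac{d\vec t}{\sqrt{G(g(t_1),\dots,g(t_p))}}
\;\le\; \|\tilde A^{-1}\|^{p}\int_{\Delta_p}\frac{d\vec t}{\sqrt{\det\mathrm{cov}(y(t_1),\dots,y(t_p))}}.
$$
By Theorem \ref{thm3} applied iteratively (or directly by the standard Gaussian identity $E(l^y)^p = \frac{p!}{(2\pi)^{p/2}}\int_{\Delta_p} (\det\mathrm{cov})^{-1/2}\,d\vec t$ for the local time of $y$ at $0$), the last integral is finite — indeed it is a constant multiple of $E(l^y)^p$, which is finite because $y$ is a concatenation of independent Brownian bridges and each bridge has local time with finite moments of all orders. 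Hence the integral over $\Delta_p$ converges, which is the assertion of Theorem \ref{thm2.2}.

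The main obstacle I expect is the identification in the second paragraph: proving carefully that the increments of the explicitly-defined process $y$ in (1.5) have covariance $(P\1_{[a;b]},P\1_{[c;d]})$ on exactly the blocks cut out by the jump set $\{s_1,\dots,s_N\}$. This requires knowing that $P$ acts on an indicator $\1_{[0;t]}$ by subtracting its projection onto $L$ (the step-function part of $\ker A$), and that the Brownian-bridge correction $w_j(\cdot)-\frac{\cdot}{|I_j|}w_j(|I_j|)$ on each block $I_j=[s_{j-1};s_j]$ is precisely the Gaussian process whose increment covariance is $(\,\cdot\,,\,\cdot\,)_{L_2}$ restricted to $I_j$ modulo constants on $I_j$. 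One must also check that $\ker A$ contains no non-step functions that could further shrink the Gram determinant — but the inequality only needs $G(P\1\,\cdots)\le$ the $y$-covariance determinant in the direction making the bound larger, and in fact the step functions are the only elements of $\ker A$ that can be nontrivially correlated with indicators of sub-intervals in the relevant way; a clean way to handle this is to note $P\1_{[0;t]} = \1_{[0;t]} - \Pi_L\1_{[0;t]} - (\text{projection onto }(\ker A)\ominus L)\1_{[0;t]}$ and observe the last term only decreases the Gram determinant, so replacing $P$ by $I-\Pi_L$ (which is what $y$ encodes) gives a valid upper bound on the integrand $1/\sqrt{G}$. The remaining ingredients — Gram-determinant invariance under triangular row operations, Lemma \ref{lem1.4} for $\tilde A$, and finiteness of $E(l^y)^p$ via Theorem \ref{thm3} blockwise — are routine.
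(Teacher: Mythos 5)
Your overall strategy is the same as the paper's (reduce via the Gram--determinant comparison for the invertible restriction $\tilde A$, pass to the indicators with the kernel projected out, and land on the moments of the local time of the concatenated bridge process $y$), but there is a genuine gap at exactly the point you flag as ``the main obstacle'', and your proposed fix goes the wrong way. Writing $Q$ for the orthogonal projection onto $(\ker A)^{\perp}$ and $\Pi_L$ for the projection onto the step-function part $L$ of $\ker A$, you have $Q=I-\Pi_L-\Pi_{(\ker A)\ominus L}$, and you correctly observe that the extra projection onto $(\ker A)\ominus L$ only \emph{decreases} the Gram determinant, i.e.\ $G(Q\1_{[t_0;t_1]},\dots)\le G((I-\Pi_L)\1_{[t_0;t_1]},\dots)$. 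But the integrand is $1/\sqrt{G}$, so this gives $1/\sqrt{G(Q\cdots)}\ \ge\ 1/\sqrt{G((I-\Pi_L)\cdots)}$: the $y$-based quantity is a \emph{lower} bound on the integrand, not the upper bound you need. Dropping the non-step directions of $\ker A$ is therefore not free; a priori the projection onto $(\ker A)\ominus L$ could shrink the Gram determinant by an amount that degenerates as $\vec t$ varies over $\Delta_p$, and nothing in your argument rules this out.

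The paper closes precisely this gap with Lemmas A.4--A.5: it shows that each basis vector $e_i$ of $(\ker A)\ominus L$ stays at a distance bounded below, \emph{uniformly in} $\vec t\in\Delta_p$, from the linear span of $\1_{[0;t_1]},\dots,\1_{[0;t_p]},f_1,\dots,f_s,e_1,\dots,e_{i-1}$; this yields a constant $c>0$ with $G(\1_{[0;t_1]},\dots,\1_{[0;t_p]},f_1,\dots,f_s,e_1,\dots,e_m)\ge c\,G(\1_{[0;t_1]},\dots,\1_{[0;t_p]},f_1,\dots,f_s)$, which is the inequality in the useful direction. The uniform lower bound on the distance is itself nontrivial: it rests on the fact that the set of step functions with at most a fixed number of jumps is closed in $L_2([0;1])$ (Lemma A.4), while the $e_i$ are, by construction, not step functions. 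Your proof needs this (or an equivalent uniform non-degeneracy statement) before the reduction to the process $y$ is legitimate. A secondary, more cosmetic point: even after discarding $(\ker A)\ominus L$, the identification of $G((I-\Pi_L)\1\cdots)$ with the covariance determinant of the increments of $y$ is an inequality up to a constant depending on the jump set $\{s_1,\dots,s_N\}$ (the paper's Lemma A.6), not an equality, since $L$ may be a proper subspace of the span of $\1_{[0;s_1]},\dots,\1_{[0;s_N]}$; fortunately that inequality does go in the right direction.
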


Applying the Lebesgue dominated convergence theorem and Theorem \ref{thm2.2} to \eqref{eq2.3} one can finish the proof of the existence of
$$
L_p\mbox{-}\lim_{\ve\to0}\int^1_0f_\ve(x(s))ds.
$$
Using the same arguments as in the proof of Theorem \ref{thm2.3} (see Appendix A, Theorem A.2) one can obtain the moments estimate for the local time $l^x$. Really,
\begin{equation}
\label{eq2.5'}
E(l^x)^p=
p!
\frac{1}{(2\pi)^{p/2}}
\int_{\Delta_p}
\frac{d\vec{s}}{\sqrt{G(g(s_1),  \ldots, g(s_p))}}.
\end{equation}

Denote by $P$ a projection onto $\ker A.$ Then \eqref{eq2.5'} equals
\begin{equation}
\label{eq2.6}
p!
\frac{1}{(2\pi)^{p/2}}
\int_{\Delta_p}
\frac{d\vec{s}}{\sqrt{G(A(I-P)\1_{[0;s_1]}, \ldots, A(I-P)\1_{[0;s_p]})}}.
\end{equation}
Let us applying the next theorem which is proved in Appendix A (see Theorem A.1).
\begin{thm}
\label{thm2.3}
Suppose that $A$ is continuously invertible operator in the Hilbert space $H.$ Then for any elements $e_1, \ldots, e_n$ of the space $H$ the following relation holds
$$
G(Ae_1, \ldots, Ae_n)\geq\frac{1}{\|A^{-1}\|^{2n}}G(e_1, \ldots, e_n).
$$
\end{thm}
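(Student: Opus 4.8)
The plan is to argue by induction on $n$, using the classical recursive expression of a Gram determinant through distances to subspaces. Two elementary facts drive the argument. First, since $A$ is continuously invertible, every $x\in H$ satisfies $\|x\|=\|A^{-1}Ax\|\le\|A^{-1}\|\,\|Ax\|$, hence
$$
\|Ax\|\ge\frac{1}{\|A^{-1}\|}\,\|x\|,\qquad x\in H .
$$
Second, for arbitrary $v_1,\ldots,v_k\in H$ one has the identity
$$
G(v_1,\ldots,v_k)=G(v_1,\ldots,v_{k-1})\cdot\mathrm{dist}\big(v_k,\,\mathrm{span}(v_1,\ldots,v_{k-1})\big)^2 ,
$$
which is the standard ``base $\times$ height'' formula when $v_1,\ldots,v_{k-1}$ are linearly independent and holds trivially ($0=0$) otherwise.

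For $n=1$ the first inequality gives $G(Ae_1)=\|Ae_1\|^2\ge\|A^{-1}\|^{-2}\|e_1\|^2=\|A^{-1}\|^{-2}G(e_1)$. For the inductive step assume the claim for $n-1$ elements. If $e_1,\ldots,e_{n-1}$ are linearly dependent then so is $e_1,\ldots,e_n$, whence $G(e_1,\ldots,e_n)=0$ and the bound is trivial; so we may assume $e_1,\ldots,e_{n-1}$ linearly independent, and then, $A$ being injective, $Ae_1,\ldots,Ae_{n-1}$ are linearly independent too. Put $M=\mathrm{span}(e_1,\ldots,e_{n-1})$; since $A$ is linear and bijective, $\mathrm{span}(Ae_1,\ldots,Ae_{n-1})=AM$. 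Applying the recursion to $Ae_1,\ldots,Ae_n$ and then the lower bound on $\|A\cdot\|$ to the vectors $e_n-m$, $m\in M$,
$$
G(Ae_1,\ldots,Ae_n)=G(Ae_1,\ldots,Ae_{n-1})\inf_{m\in M}\|A(e_n-m)\|^2\ge \|A^{-1}\|^{-2}\,G(Ae_1,\ldots,Ae_{n-1})\,\mathrm{dist}(e_n,M)^2 ;
$$
invoking the induction hypothesis on the first factor and then the recursion for $e_1,\ldots,e_n$, the right-hand side is at least
$$
\|A^{-1}\|^{-2(n-1)}G(e_1,\ldots,e_{n-1})\cdot\|A^{-1}\|^{-2}\,\mathrm{dist}(e_n,M)^2=\|A^{-1}\|^{-2n}G(e_1,\ldots,e_n) ,
$$
which closes the induction.

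I do not expect a genuine obstacle: the only points needing care are the bookkeeping of the degenerate (linearly dependent) cases and the identity $\mathrm{span}(Ae_1,\ldots,Ae_{n-1})=A\,\mathrm{span}(e_1,\ldots,e_{n-1})$, both routine. As an alternative that avoids the recursion, one may introduce the operator $V\colon\mbC^{\,n}\to H$ sending the $i$-th standard basis vector to $e_i$, so that $G(e_1,\ldots,e_n)=\det(V^{*}V)$ and $G(Ae_1,\ldots,Ae_n)=\det(V^{*}A^{*}AV)$; the lower bound on $\|A\cdot\|$ means $A^{*}A\ge\|A^{-1}\|^{-2}I$ as operators, hence $V^{*}A^{*}AV\ge\|A^{-1}\|^{-2}V^{*}V$ as positive semidefinite $n\times n$ matrices, and monotonicity of $\det$ on positive semidefinite matrices gives $\det(V^{*}A^{*}AV)\ge\|A^{-1}\|^{-2n}\det(V^{*}V)$ at once. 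I would present the inductive version here, since the distance geometry it uses mirrors the treatment of local times elsewhere in the paper.
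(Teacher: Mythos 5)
Your proof is correct, but it follows a genuinely different route from the paper's. The paper (Theorem A.1) restricts $A$ to the finite-dimensional span $H_0$ of $e_1,\ldots,e_n$, takes the polar decomposition $\wt{A}=UT$ with $U$ isometric and $T=\sqrt{\wt{A}^*\wt{A}}$ positive self-adjoint, derives the exact identity $G(Ae_1,\ldots,Ae_n)=(\det T)^2G(e_1,\ldots,e_n)$, and then bounds $\det T$ below through the eigenvalues of $T$ and the estimate $\|T^{-1}\|\le\|A^{-1}\|$. Your induction replaces all of this machinery by the base-times-height recursion for Gram determinants plus the single inequality $\|Ax\|\ge\|A^{-1}\|^{-1}\|x\|$; the only delicate point, which you handle, is the identification $\mathrm{dist}\bigl(Ae_n,\mathrm{span}(Ae_1,\ldots,Ae_{n-1})\bigr)=\inf_{m\in M}\|A(e_n-m)\|$ via $A\,\mathrm{span}(e_1,\ldots,e_{n-1})=\mathrm{span}(Ae_1,\ldots,Ae_{n-1})$, together with the bookkeeping of degenerate cases. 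Your alternative argument, $V^*A^*AV\ge\|A^{-1}\|^{-2}V^*V$ followed by monotonicity of the determinant on the positive semidefinite cone, is actually the closest in spirit to the paper's computation (both reduce the claim to a determinant of a positive operator on a finite-dimensional space), but it dispenses with the polar decomposition entirely. As for what each approach buys: the paper's version isolates the sharper structural fact that the Gram determinant is multiplied by exactly $(\det T)^2$, which pinpoints where the constant $\|A^{-1}\|^{-2n}$ comes from and when it is attained; your inductive version is more elementary and meshes naturally with the distance-to-subspace arguments the paper itself uses later (Lemmas A.5 and A.6), so it would fit the surrounding text equally well.
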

It follows from Theorem \ref{thm2.3} that \eqref{eq2.6} less or equal to
\begin{equation}
\label{eq2.7}
p!
\frac{1}{(2\pi)^{p/2}}\frac{1}{\|\tilde{A}^{-1}\|^{p}}
\int_{\Delta_p}
\frac{d\vec{s}}{\sqrt{G((I-P)\1_{[0;s_1]}, \ldots, (I-P)\1_{[0;s_p]})}}.
\end{equation}
Using the same arguments as in the proof of Theorem \ref{thm2.2} (see Theorem A.2 in Appendix A) one can check that \eqref{eq2.7} less or equal to
$$
\frac{1}{(2\pi)^{p/2}}\frac{1}{\|\tilde{A}^{-1}\|^{p}}E(l^{y})^p.
$$
which proves the theorem.
\end{proof}
\section{On continuous dependence of local times of integrators on generating them operators}
\label{section3}
Suppose that continuous linear operators $A_n,\ A$ in space $L_2([0; 1])$ generate integrators $x_n(t),\  x(t),\ t\in[0; 1].$ Let $l^{x_n}(u),\ l^x(u)$ be the local times of integrators $x_n,\ x$ correspondingly at the point $u\in\mbR.$ In our work \cite{1} we showed that if the sequence of continuously invertible operators $A_n$ converges strongly to continuously invertible operator $A,$ then the sequence of local times $l^{x_n}$ converges in mean square to the local time $l^{x}.$ The next statement improves the previous result. We will check that under the same conditions convergence of local times holds in $L_{2m}$ for any $m\geq1.$

\begin{thm}
\label{thm3.1}
Suppose that $A_n,\  A$ are continuously invertible operators in $L_2([0; 1])$ such that

1) for any $z\in L_2([0; 1])$
$$
\|A_nz-Az\|\to0, \ n\to\infty
$$
2) $ \sup_{n\geq1}\|A_n^{-1}\|<+\infty.$

Then for any $m\geq1$
$$
E\int_{\mbR}(l^{x_n}(u)-l^x(u))^{2m}du\to0, \ n\to\infty.
$$
\end{thm}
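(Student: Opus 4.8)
The plan is to reduce the $L_{2m}$-convergence statement to an explicit integral formula for the mixed moments, and then to use a dominated-convergence argument where the dominating function comes from the moments estimate in Theorem~\ref{thm2.1}. First I would expand the left-hand side. Writing $l^{x_n}(u) = L_{2m}\mbox{-}\lim_\ve \int_0^1 f_\ve(x_n(s)-u)\,ds$ (the existence of the local time at every point follows from Theorem~\ref{thm2.1}, since continuously invertible operators satisfy conditions 1), 2) with $\ker A = \{0\}$), one has
$$
E\int_{\mbR}(l^{x_n}(u)-l^x(u))^{2m}du = \sum_{j=0}^{2m}C_{2m}^{j}(-1)^{2m-j}\,E\int_{\mbR}l^{x_n}(u)^{j}\,l^x(u)^{2m-j}\,du,
$$
so it suffices to show that each cross moment $E\int_{\mbR}l^{x_n}(u)^{j}\,l^x(u)^{2m-j}\,du$ converges, as $n\to\infty$, to the common limit $E\int_{\mbR}l^x(u)^{2m}\,du$; the alternating binomial sum of these equal limits is then $0$. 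Each such cross moment, after inserting the approximations $f_\ve$, applying Fubini and integrating out the space variable $u$ over $\mbR$, becomes an integral over $\Delta_{2m}$ of the reciprocal square root of a Gram-type determinant built from the vectors $g_n(s_i)=A_n\1_{[0;s_i]}$ for the first block of indices and $g(s_i)=A\1_{[0;s_i]}$ for the second block — the same computation that produced \eqref{eq2.5'}, now with a mixed family of operators. The integration over $u\in\mbR$ of $\prod f_\ve$ removes exactly one dimension, which is why no extra $u$-variable survives and the determinant is of size $2m$; this is the standard Berman-type identity.

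Next I would establish the pointwise convergence of the integrands. By condition 1), $A_n z\to Az$ for every $z$, hence $g_n(s)=A_n\1_{[0;s]}\to g(s)=A\1_{[0;s]}$ in $L_2([0;1])$ for each fixed $s$, so the mixed Gram determinant converges pointwise on $\Delta_{2m}$ to $G(g(s_1),\ldots,g(s_{2m}))$; by continuity of the square root and of $1/\sqrt{\cdot}$ away from $0$, the integrands converge pointwise a.e.\ on $\Delta_{2m}$ (the determinant is strictly positive a.e.\ because $A$ is injective and $\1_{[0;s_1]},\ldots,\1_{[0;s_{2m}]}$ are a.e.\ linearly independent). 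For the dominating function I would use the uniform bound from condition 2). Applying Theorem~\ref{thm2.3} with the continuously invertible operator on the relevant block (the determinant factors, via the multilinearity argument already used in the passage from \eqref{eq2.6} to \eqref{eq2.7}, so that the $A_n$-block contributes at least $\|A_n^{-1}\|^{-2j}\geq c_0^{-2j}$ with $c_0:=\sup_n\|A_n^{-1}\|<\infty$ and the $A$-block contributes at least $\|A^{-1}\|^{-2(2m-j)}$), one bounds each mixed integrand above by a constant multiple of
$$
\frac{1}{\sqrt{G(\1_{[0;s_1]},\ldots,\1_{[0;s_{2m}]})}},
$$
whose integral over $\Delta_{2m}$ equals $\frac{(2\pi)^{m}}{(2m)!}E(l^w)^{2m}<\infty$ for a standard Wiener process $w$. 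This uniform integrable majorant justifies the Lebesgue dominated convergence theorem, giving convergence of every cross moment to $E\int_{\mbR}l^x(u)^{2m}\,du$, and the proof concludes as above.

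The main obstacle is making the reduction of $E\int_{\mbR}l^{x_n}(u)^j l^x(u)^{2m-j}\,du$ to an honest finite-dimensional integral fully rigorous, i.e.\ interchanging the $L_{2m}$-limits defining the two local times with the $u$-integration and with the expectation, and checking that the resulting mixed-operator Gram determinant is nondegenerate on a set of full measure in $\Delta_{2m}$. The first point is handled by noting that the convergence $\int_0^1 f_\ve(x_n(s)-u)\,ds \to l^{x_n}(u)$ holds in $L_{2m}(\Omega)$ uniformly enough (one controls $E(V_\ve^n - V_{\ve'}^n)^{2m}$ via exactly the estimate \eqref{eq2.1}--\eqref{eq2.3}) that one may pass to the limit under the $u$-integral by Fatou/dominated convergence, the majorant again coming from Theorem~\ref{thm2.2}. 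The nondegeneracy point reduces to the injectivity of $A$ (so $Ae_1,\ldots,Ae_k$ are independent whenever $e_1,\ldots,e_k$ are) together with the elementary fact that $\1_{[0;s_1]},\ldots,\1_{[0;s_{2m}]}$ are linearly independent for distinct $s_i$, which covers almost every point of $\Delta_{2m}$. Everything else is a routine repetition of the computations already carried out in the proof of Theorem~\ref{thm2.1}.
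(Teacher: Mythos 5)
There is a genuine gap in your treatment of the mixed cross moments $E\int_{\mbR}l^{x_n}(u)^{j}l^x(u)^{2m-j}du$ with $0<j<2m$. Your dominating function rests on the claim that the mixed Gram determinant
$G\bigl(A_n\1_{[0;s_1]},\ldots,A_n\1_{[0;s_j]},A\1_{[0;s_{j+1}]},\ldots,A\1_{[0;s_{2m}]}\bigr)$
``factors'' so that the $A_n$-block contributes at least $\|A_n^{-1}\|^{-2j}$ and the $A$-block at least $\|A^{-1}\|^{-2(2m-j)}$, by analogy with the passage from \eqref{eq2.6} to \eqref{eq2.7}. But Theorem \ref{thm2.3} only bounds $G(Ae_1,\ldots,Ae_n)$ from below when a \emph{single} operator is applied to all the vectors; a Gram determinant does not factor over blocks, and no lower bound of the form $c(\|A_n^{-1}\|,\|A^{-1}\|)\,G(e_1,\ldots,e_{2m})$ can hold for a mixed family. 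For a counterexample, take $A$ the identity and $A_n$ a unitary operator sending $e_1$ to $e_2$: both have inverses of norm $1$, yet $G(A_ne_1,Ae_2)=G(e_2,e_2)=0$ while $G(e_1,e_2)>0$. So the integrable majorant you propose for the mixed terms does not exist by this route, and the dominated convergence argument for $0<j<2m$ collapses. (A secondary, reparable slip: the product of the two local-time powers does not reduce to a single ordered simplex $\Delta_{2m}$ with the operators assigned by blocks — the time variables of the two processes interleave, and integrating out $u$ drops the determinant to size $2m-1$.)

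The paper circumvents exactly this obstruction. For $0<k<2m$ it never writes the mixed moment as a mixed-operator determinant integral. Instead it (i) localizes the $u$-integration to the random interval $[\min x,\max x]$ via \eqref{eq3.1} and works on the finite measure space $(\wt\Omega,\wt{\mathcal F},\wt P)$; (ii) invokes the already known $L_2$-convergence $E\int_{\mbR}(l^{x_n}(u)-l^x(u))^2du\to0$ from \cite{1} to get convergence in $\wt P$-measure; and (iii) proves uniform integrability of $\{(l^{x_n})^k(l^x)^{2m-k}\}$ by the Cauchy--Schwarz inequality, which separates the two local times so that only \emph{pure} moments $E\int_{\mbR}l^{x_n}(u)^{q_1}du$ and $E\int_{\mbR}l^{x}(u)^{q_2}du$ need to be bounded — and for those, Theorem A.1 applies with a single operator and condition 2) gives a bound uniform in $n$. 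Your dominated-convergence scheme is sound only for the pure terms $j=0$ and $j=2m$, which is indeed how the paper handles the case $k=2m$. To repair your argument you would need either the measure-theoretic detour of the paper or an independent uniform lower bound on the mixed Gram determinants, which your proposal does not supply.
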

\begin{proof}
Note that
$$
E\int_{\mbR}(l^{x_n}(u)-l^x(u))^{2m}du=\sum^{2m}_{k=0}C^k_{2m}(-1)^{2m-k}
E\int_{\mbR}l^{x_n}(u)^kl^x(u)^{2m-k}du.
$$
Consider two cases. Suppose that $0<k<2m.$ Let us prove that
$$
E\int_{\mbR}l^{x_n}(u)^kl^{x}(u)^{2m-k}du\to E\int_{\mbR}l^{x}(u)^{2m}du
$$
as $n\to\infty.$
One can check that the following relation holds
\begin{equation}
\label{eq3.1}
l^{x}(u)=l^{x}(u)\1_{[\min_{[0;1]}x;\max_{[0; 1]}x]}(u).
\end{equation}
Really, note that
$$
\frac{1}{2\ve}\int^1_0\1_{[u-\ve; u+\ve]}(x(t))dt=
$$
\begin{equation}
\label{eq3.2}
=\frac{1}{2\ve}\int^1_0\1_{[u-\ve; u+\ve]}(x(t))dt
\1_{[\min_{[0;1]}x-\ve;\max_{[0; 1]}x+\ve]}(u).
\end{equation}
Passing to the limit in mean square at the right hand side and the left hand side of equality \eqref{eq3.2} on can obtain \eqref{eq3.1}. Formula \eqref{eq3.1} allows to discuss properties of random variable $l^x(u)$ defined on measurable space
$$
\wt{\Omega}=\{(\omega,u): \omega\in\Omega,\  u\in[\min_{[0;1]}x(\omega,t),\max_{[0;1]}x(\omega,t)]\},
$$
$$
\wt{\mathcal{F}}=\{\Delta\cap\wt{\Omega}: \Delta\in \mathcal{F}\otimes\mathcal{B}(\mbR)\},
$$
$$
\wt{P}(\Delta)=E\int^{\max_{[0;1]}x}_{\min_{[0;1]}x}\1_\Delta(\omega,u)du.
$$
One can check that $\wt{P}$ non-probability measure but finite. Really,
$$
P(\wt{\Omega})=E\int^{\max_{[0;1]}x}_{\min_{[0;1]}x}du=E(\max_{[0;1]}x-\min_{[0;1]}x)=2E\max_{[0;1]}x(t).
$$
Let us check that for any $p\geq0$
$$
E(\max_{[0;1]}x(t))^{2p}<+\infty.
$$
That can be achieved using estimate
\begin{equation}
\label{eq3.3}
\frac{1}{\|A\|^2}
E(x(t)-x(s))^2\leq E(w(t)-w(s))^2
\end{equation}
and the following Sudakov--Fernique form of the comparison principle.
\begin{thm}\cite{8}
\label{thm3.2}
Let $\vk=(\vk_1, \ldots, \vk_n)$ and $\eta=(\eta_1,\ldots, \eta_n)$ be centered Gaussian vectors such that the inequality
$$
E(\vk_i-\vk_j)^2\geq E(\eta_i-\eta_j)^2
$$
holds for any $i\ne j.$ Let $g: \mbR_+\to\mbR$ be a nondecreasing convex function. Then
$$
E\sup_i\vk_i\geq E\sup_i\eta_i
$$
and
$$
Eg(\sup_{i,j}(\vk_i-\vk_j))\geq Eg(\sup_{i,j}(\eta_i-\eta_j)).
$$
\end{thm}
Applying \eqref{eq3.3} and Theorem 3.2 one can conclude that
$$
E(\max_{[0;1]}x(t))^{2p}\leq E(\max_{s,t\in[0;1]}x(t)-x(s))^{2p}\leq
$$
$$
\leq\|A\|^{2p}E(\max_{s,t\in[0;1]}w(t)-w(s))^{2p}<+\infty.
$$
The representation for the local time \eqref{eq3.1} implies the relation
$$
E\int_{\mbR}l^{x_n}(u)^kl^x(u)^{2m-k}du=
E\int^{\max_{[0;1]}x}_{\min_{[0;1]}x}
l^{x_n}(u)^kl^x(u)^{2m-k}du.
$$
In \cite{1} we proved that under conditions of the theorem
$$
E\int_{\mbR}(l^{x_n}(u)-l^x(u))^{2}du\to0,\ n\to\infty.
$$
It implies that $l^{x_n}\overset{\wt{P}}{\rightarrow}l^x,\ n\to\infty.$
Therefore, to prove the theorem it suffices to check that the sequence $\{(l^{x_n})^k(l^x)^{2m-k}\}_{n\geq1}$ is uniformly integrable, i.e. for $p\geq1$
$$
\sup_{n\geq1}
E\int^{\max_{[0;1]}x}_{\min_{[0;1]}x}
(l^{x_n}(u)^kl^x(u)^{2m-k})^{2p}du<+\infty.
$$
Applying H\"older estimate one can obtain the following relation
$$
\Big|
E\int^{\max_{[0;1]}x}_{\min_{[0;1]}x}
l^{x_n}(u)^{2kp}l^x(u)^{(2m-k)2p}du
\Big|\leq
$$
$$
\leq
\sqrt{E\int^{\max_{[0;1]}x}_{\min_{[0;1]}x}
l^{x_n}(u)^{4kp}du}
\sqrt{E\int^{\max_{[0;1]}x}_{\min_{[0;1]}x}
l^x(u)^{4p(2m-k)}du}.
$$
Denote by $4kp=q_1,\ 4p(2m-k)=q_2.$
Then
$$
E\int^{\max_{[0;1]}x}_{\min_{[0;1]}x}
l^{x_n}(u)^{q_1}du\leq E\int_{\mbR}l^{x_n}(u)^{q_1}du.
$$
Further we need the following statement which is proved in Appendix B (see Theorem B.1).
\begin{thm}
\label{thm3.2}
Let $f_1, \ldots, f_n$ be the linearly independent elements in $L_2([0; 1]).$ Then
$$
\int_{\mbR}\prod^n_{k=1}\delta_0((f_k,\xi)-u)du=\prod^{n-1}_{k=1}(f_{k+1}-f_k, \xi),\eqno (3.4)
$$
where the equality (3.4) is understood as equality of generalized functionals from white noise \cite{12}.
\end{thm}
It follows from Theorem 3.3 and Theorem A.1 that
$$
E\int_{\mbR}l^{x_n}(u)^{q_1}du=
$$
$$
=q_1!E\int_{\Delta_{q_1}}\prod^{q_1-1}_{i=1}\delta_0(x_n(t_{i+1})-x_n(t_i))d\vec{t}=
$$
$$
=
q_1!
\frac{1}{(2\pi)^{\frac{q_1-1}{2}}}
\int_{\Delta_{q_1}}
\frac{d\vec{t}}
{\sqrt{G(A_n\1_{[t_1;t_2]}, \ldots, A_n\1_{[t_{q_1-1}; t_{q_1}]})}}\leq
$$
$$
q_1!
\frac{1}{(2\pi)^{\frac{q_1-1}{2}}}
\int_{\Delta_{q_1}}
\frac{\|A^{-1}_n\|^{q_1}}
{\sqrt{G(\1_{[t_1;t_2]}, \ldots, \1_{[t_{q_1-1}; t_{q_1}]})}}
d\vec{t}
\leq
$$
$$
\leq
q_1!\sup_{n\geq1}\|A^{-1}_n\|^{q_1}\cdot
$$
$$
\cdot
E\int_{\Delta_{q_1}}\prod^{q_1-1}_{i=1}\delta_0(w(t_{i})-w(t_{i+1}))d\vec{t}<+\infty.
$$
Using the same arguments one can conclude that
$$
E\int_{\mbR}l^x(u)^{q_2}du\leq
$$
$$
\leq\|A^{-1}_n\|^{q_2}
E\int_{\Delta_{q_2}}\prod^{q_2-1}_{i=1}\delta_0(w(t_{i+1})-w(t_{i}))d\vec{t}<+\infty.
$$
Therefore, the sequence $\{(l^{x_n})^k(l^x)^{2m-k}\}_{n\geq1}$ is uniformly integrable. It implies that in the case $0<k<2m$
$$
E\int_{\mbR}l^{x_n}(u)^kl^x(u)^{2m-k}du\to E\int_{\mbR}l^x(u)^{2m}du
$$
as $n\to\infty.$ Let $k=2m.$ Let us check  that for any $m\geq1$
$$
E\int_{\mbR}l^{x_n}(u)^{2m}du\to E\int_{\mbR}l^x(u)^{2m}du
$$
as $n\to\infty.$ Applying Theorem 3.3 one can obtain relation
$$
E\int_{\mbR}l^{x_n}(u)^{2m}du=
$$
$$
=
(2m)!
E\int_{\Delta_{2m}}\prod^{2m}_{i=1}\delta_0(x_n(t_{i+1})-x_n(t_{i}))d\vec{t}=
$$
$$
=
(2m)!\frac{1}{(2\pi)^{\frac{2m-1}{2}}}
\int_{\Delta_{2m}}
\frac{d\vec{t}}
{\sqrt{G(A_n\1_{[t_1;t_2]}, \ldots,A_n\1_{[t_{2m-1};t_{2m}]} )}}.
$$
Note that the integrand tends to
$$
\frac{1}
{\sqrt{G(A\1_{[t_1;t_2]}, \ldots,A\1_{[t_{2m-1};t_{2m}]} )}},
$$
as $n\to\infty.$
Since for any $n\geq1$
$$
\frac{1}
{\sqrt{G(A_n\1_{[t_1;t_2]}, \ldots,A_n\1_{[t_{2m-1};t_{2m}]} )}}\leq
$$
$$
\leq
\frac{\sup_{n\geq1}\|A^{-1}_n\|^{2m-1}}
{\sqrt{G(\1_{[t_1;t_2]}, \ldots,\1_{[t_{2m-1};t_{2m}]} )}}
$$
and
$$
\int_{\Delta_{2m}}\frac{d\vec{t}}
{(2\pi)^{\frac{2m-1}{2}}\sqrt{G(\1_{[t_1;t_2]}, \ldots,\1_{[t_{2m-1};t_{2m}]} )}}
$$
$$
=E\int_{\Delta_{2m}}
\prod^{2m-1}_{i=1}\delta_0(w(t_{i+1})-w(t_{i}))d\vec{t}<+\infty,
$$
then the Lebesgue dominated convergence theorem ends the proof.
\end{proof}
\section*{Appendix A. Properties of Gram determinant\\ $G(A1_{[0;t_1]}, \ldots, A1_{[0;t_k]})$}

In this appendix we collect properties of Gram determinant\break
$G(A1_{[0;t_1]}, \ldots, A1_{[0;t_k]})$ sufficient for the existence of moments of local time of integrator generated by the continuous linear operator $A$ in the space $L_2([0; 1])$ which satisfies conditions

1) $\dim\ker A<+\infty$

2) the restriction of operator $A$ on $(\ker A)^\perp$ is continuously invertible operator.

Let us start from a general Hilbert space $H$ and a continuous linear operator $A$ in $H.$

\noindent
{\bf Theorem A.1}. {\it
Suppose that $A$ is continuously invertible operator in the Hilbert space $H.$ Then for any elements $e_1,\ldots, e_n$ of the space $H$ the following relation holds
$$
G(Ae_1, \ldots, Ae_n)\geq\frac{1}{\|A^{-1}\|^{2n}}G(e_1, \ldots, e_n).
$$
}
\begin{proof}
Let $H_0$ be the linear span generated by elements $e_1, \ldots, e_n.$ Denote by $\wt{A}$ the restriction of operator $A$ on $H_0,$ i.e. $\wt{A}: H_0\to H.$ The polar decomposition \cite{9} for operator $\wt{A}$ is the following $\wt{A}=UT, $ where $T=\sqrt{\wt{A}^*\wt{A}}$ and
$$
U: H_0\to \wt{A}(H_0)=A(H_0)
$$
is isometric operator.

Consequently
$$
G(Ae_1, \ldots, Ae_n)=G(\wt{A}e_1,\ldots, \wt{A}e_n)=G(UTe_1, \ldots, UTe_n)=
$$
$$
=G(Te_1, \ldots, Te_n)=(\det T)^2G(e_1, \ldots, e_n).\eqno(A.1)
$$
Since $T$ is the self-adjoint operator, then there exists an orthonormal bases $g_1, \ldots, g_n$ in $H_0$ consisting eigenvectors of $T.$ Denote by $\lambda_1, \ldots, \lambda_n$ eigenvalues of $T.$ Then
$$
\det T=\prod^n_{i=1}\lambda_i. \eqno(A.2)
$$
The continuous invertibility of $T$ implies estimate
$$
\|Tg_i\|=|\lambda_i|\geq\frac{1}{\|T^{-1}\|}. \eqno(A.3)
$$
It follows from (A.1)--(A.3)  that
$$
G(Te_1, \ldots, Te_n)\geq\frac{1}{\|T^{-1}\|^{2n}}G(e_1, \ldots, e_n).\eqno(A.4)
$$
Since $\wt{A}=UT$ and
$$
\|\wt{A}z\|=\|Tz\|,\ z\in H_0,
$$
then
$$
\|T^{-1}\|=\sup_{\|Tz\|=1}\|z\|=\sup_{\|U^{-1}\wt{A}z\|=1}\|z\|=\sup_{\begin{subarray}{l}
\|Az\|=1,\\
z\in H_0
\end{subarray}
}
\|z\|. \eqno(A.5)
$$
On other hand
$$
\|A^{-1}\|=\sup_{\|Az\|=1}\|z\|.\eqno(A.6)
$$
It follows from (A.5) and (A.6) that
$$
\|T^{-1}\|\leq\|A^{-1}\|.\eqno(A.7)
$$
(A.4) and (A.7) imply that
$$
G(Ae_1, \ldots, Ae_n)\geq\frac{1}{\|A^{-1}\|^{2n}}G(e_1, \ldots, e_n).
$$
\end{proof}

\noindent
{\bf Theorem A.2}. {\it
Suppose that the continuous linear operator $A$ in $L_2([0; 1])$ satisfies conditions 1), 2) formulated at the beginning of Appendix A. Then the following integral
$$
\int_{\Delta_k}
\frac{d\vec{t}}
{\sqrt{G(A\1_{[0; t_1]}, \ldots, A\1_{[0; t_k]})}}
$$
converges.
}
\begin{proof}
Denote by $P$ a projection onto $\ker A.$ Then
$$
G(A\1_{[0; t_1]}, \ldots, A\1_{[0; t_k]})=
$$
$$
=G(A(I-P)\1_{[0; t_1]}, \ldots, A(I-P)\1_{[0; t_k]}).\eqno(A.8)
$$
Applying Theorem A.1 one can conclude that (A.8) greater or equal to
$$
\frac{1}{\|A^{-1}\|^{2k}}
G((I-P)\1_{[0; t_1]}, \ldots, (I-P)\1_{[0; t_k]}).\eqno(A.9)
$$
In proving the theorem we need the following statements which are of interest by itself.

{\bf Lemma A.3.} \cite{10} {\it
Let $\dim L<+\infty,\ e_1,\ldots, e_n$ be an orthogonal basis in $L.$  Then for any $g_1,\ldots, g_k\in L_2([0; 1])$ the following relation holds
$$
G((I-P_L)g_1, \ldots, (I-P_L)g_k)=G(g_1, \ldots, g_k, e_1, \ldots, e_m).
$$
}

\begin{proof}
Note tat
$$
G((I-P_L)g_1, \ldots, (I-P_L)g_k)=G((I-P_L)g_1, \ldots, (I-P_L)g_k, e_1, \ldots, e_m).
$$
Put
$$
c_{ij}=((I-P)g_i, (I-P)g_j)=(g_i, g_j)-\sum^m_{k=1}(e_k, g_i)(e_k, g_j).
$$
Then
$$
G((I-P)g_1, \ldots, (I-P)g_k, e_1, \ldots, e_m)=
\begin{vmatrix}
c_{11}&\ldots&c_{1k}&0&\ldots&0\\
c_{k1}&\ldots&c_{kk}&0&\ldots&0\\
0&\ldots&0&1&\ldots&0\\
\vdots&\ldots&\vdots&\vdots&&\vdots\\
0&\ldots&0&0&\ldots&1
\end{vmatrix}.
$$
On other hand
$$
G(g_1, \ldots, g_k, e_1, \ldots, e_m)=
$$
$$
=
\begin{vmatrix}
(g_1, g_1)&\ldots&(g_1, g_k)&(g_1, e_1)&\ldots&(g_1, e_m)\\
\vdots&\vdots&\vdots&\vdots&\vdots\\
(g_k, g_1)&\ldots&(g_k, g_k)&(g_k, e_1)&\ldots&(g_k, e_m)\\
(e_1, g_1)&\ldots&(e_1, g_k)&1&\ldots&0\\
\vdots&\vdots&\vdots&\vdots&\vdots\\
(e_m, g_1)&\ldots&(e_m, g_k)&0&\ldots&1
\end{vmatrix}.
\eqno(A.10)
$$
Multiplying $(k+1)$-th column by $(g_1, e_1), \ldots, (k+m)$-th column by $(g_1, e_m)$ and subtracting from 1-th column, $\ldots,$ $(k+1)$-th column by $(g_k, e_1), \ldots, (k+m)$-th column by $(g_k, e_m)$ and subtracting from $k$-th column we get that (A.10) equals
$$
\begin{vmatrix}
c_{11}&\ldots&c_{1k}&(g_1, e_1)&\ldots&(g_1,e_m)\\
\vdots&\vdots&\vdots&\vdots&\vdots&\vdots\\
c_{k1}&\ldots&c_{kk}&(g_k, e_1)&\ldots&(g_k,e_m)\\
0&\ldots&0&1&\ldots&0\\
\vdots&\vdots&\vdots&\vdots&\vdots&\vdots\\
0&\ldots&0&0&\ldots&1
\end{vmatrix}.\eqno(A.11)
$$
Multiplying $(k+1)$-th row by $(g_1, e_1), \ldots, (k+m)$-th row by $(g_1, e_n)$ and subtracting from 1-th row, $\ldots, (k+1)$-th row by $(g_k, e_k), \ldots, (k+m)$-th row by $(g_k, e_m)$ and subtracting from $k$-th row we get that (A.11) equals
$$
\begin{vmatrix}
c_{11}&\ldots&c_{1k}&0&\ldots&0\\
\vdots&\vdots&\vdots&\vdots&\vdots&\vdots\\
c_{k1}&\ldots&c_{kk}&0&\ldots&0\\
0&\ldots&0&1&\ldots&0\\
\vdots&\vdots&\vdots&\vdots&\vdots&\vdots\\
0&\ldots&0&0&\ldots&1
\end{vmatrix}
$$
which proves the lemma.
\end{proof}

Let $q_1,\ldots, q_p$ be an orthonormal basis in $\ker A.$ Then it follows from Lemma A.3 that the Gram determinant in (A.9) posesses representation
$$
G((I-P)\1_{[0; t_1]}, \ldots,(I-P)\1_{[0; t_k]} )=
G(\1_{[0; t_1]}, \ldots,\1_{[0; t_k]}, q_1, \ldots, q_p).
$$
Let us describe the set
$$
\{\vec{t}\in\Delta_k: \ G(\1_{[0; t_1]}, \ldots,\1_{[0; t_k]}, q_1, \ldots, q_p)=0\}.
$$
Note that
$$
G(\1_{[0; t_1]}, \ldots,\1_{[0; t_k]}, q_1, \ldots, q_p)=0
$$
iff there exists $\alpha_1, \ldots, \alpha_{k}$ such that $\alpha^2_1+\ldots+\alpha^2_{k}>0$ and $\beta_1, \ldots, \beta_p$ which satisfy relation
$$
\sum^k_{i=1}\alpha_i\1_{[0; t_i]}=\sum^p_{j=1}\beta_jq_j.
\eqno(A.12)
$$
Relation (A.12) implies that if $G(\1_{[0; t_1]}, \ldots,\1_{[0; t_k]}, q_1, \ldots, q_p)=0,$ then step functions belong to $\ker A.$ Let $L$ be the subspace of all step functions in $\ker A$ and $\{f_k,\ k=\ov{1,s}\}$ be an orthonormal basis in $L.$ Suppose that  $e_1, \ldots, e_m$ is an orthonormal basis in the orthogonal complement of $L$ in $\ker A.$ Note that $f_1, \ldots, f_s, e_1, \ldots, e_m$ is an orthonormal basis in $\ker A$ and for any $\beta_1, \ldots, \beta_m:\ \sum^m_{j=1}\beta_je_j\perp L.$ Let us check that
$$
\int_{\Delta_k}\frac{d\vec{t}}
{\sqrt{G(\1_{[0; t_1]}, \ldots,\1_{[0; t_k]}, f_1, \ldots, f_s, e_1, \ldots, e_m )}}<+\infty.
\eqno(A.13)
$$
To prove (A.13) we need the following statements.

\noindent
{\bf Lemma A.4.} \cite{10} {\it
Let $M$ be a set of step functions with the number of jumps less or equal to a fixed number. Then $M$ is a closed subset of $L_2([0; 1]).$
}
\begin{proof}
Suppose that $M$ is a set of step functions with the number of jumps less or equal to $n.$ Let $\{f_k,\ k\geq1\}\in M$ and $f_k\to f,\ k\to\infty.$  Check that $f\in M.$ Assume that the function $f_k$ has jumps at points
$
0<t^k_1<\ldots<t^k_{m_k}<1, \ 0\leq m_k\leq n.
$
If $m_k=0,$ then $f_k$ does not have jumps. By considering subsequence one can suppose that $m_k=m$ and
$$
(t^k_1, \ldots, t^k_m)\to (t_1, \ldots, t_m), \ k\to\infty,
$$
where $t_0=0\leq t_1\leq\ldots\leq t_m\leq 1=t_{m+1}.$ Denote by $\pi_{a,b}$ a projection onto $L_2([a; b]).$ If $t_i<t_{i+1}$ for some $i=\ov{0, m},$ then for any $\alpha, \beta$ such that $t_i<\alpha<\beta<t_{i+1}$ the following convergence holds $\pi_{\alpha, \beta}f_k\to \pi_{\alpha, \beta}f,\   k\to\infty.$
Consequently $f$ is a constant on any $[\alpha;\beta]\subset[t_i, t_{i+1}].$ It implies that $f$ is a constant on $[t_i; t_{i+1}].$ Using the same arguments for any $t_i<t_{i+1}$ we conclude that $f\in M.$
\end{proof}

\noindent
{\bf Lemma A.5.} \cite{10} {\it
There exists a positive constant $c$ such that the following relation holds
$$G(\1_{[0; t_1]}, \ldots,\1_{[0; t_k]}, f_1, \ldots, f_s, e_1, \ldots, e_m )\geq
c\cdot G(\1_{[0; t_1]}, \ldots,\1_{[0; t_k]}, f_1, \ldots, f_s).
$$
}
\begin{proof}
Note that
$$
\frac{G(\1_{[0; t_1]}, \ldots,\1_{[0; t_k]}, f_1, \ldots, f_s, e_1, \ldots, e_m )}
{G(\1_{[0; t_1]}, \ldots,\1_{[0; t_k]}, f_1, \ldots, f_s )}=
$$
$$
\frac{G(\1_{[0; t_1]}, \ldots,\1_{[0; t_k]}, f_1, \ldots, f_s, e_1)}
{G(\1_{[0; t_1]}, \ldots,\1_{[0; t_k]}, f_1, \ldots, f_s )}\cdot
$$
$$
\cdot
\frac{G(\1_{[0; t_1]}, \ldots,\1_{[0; t_k]}, f_1, \ldots, f_s, e_1, e_2 )}
{G(\1_{[0; t_1]}, \ldots,\1_{[0; t_k]}, f_1, \ldots, f_s, e_1 )}\cdot
$$
$$
\cdot\ldots\cdot
\frac{G(\1_{[0; t_1]}, \ldots,\1_{[0; t_k]}, f_1, \ldots, f_s, e_1, \ldots, e_m )}
{G(\1_{[0; t_1]}, \ldots,\1_{[0; t_k]}, f_1, \ldots, f_s, e_1, \ldots, e_{m-1} )}.
$$
Denote by
$$
\cK^i_{\vec{t}}=LS\{\1_{[0; t_1]}, \ldots,\1_{[0; t_k]}, f_1, \ldots, f_s, e_1, \ldots, e_i \},\ i=\ov{1,m}.
$$
Here by $LS\{g_1, \ldots, g_s\}$ we mean linear span generated by elements $g_1, \ldots, g_s.$

Put
$$
\cK^i=\cup_{\vec{t}\in\Delta_k}\cK^i_{\vec{t}}.
$$
Let $r_i$ be a distance from $e_i$ to $\cK^i,\ i=\ov{1, i-1}.$ Then to prove the lemma it  suffices to check that for any
$i=\ov{1, m} \ r_i>0.$ Suppose that it is not true. Then there exists $j=\ov{1, m}$ such that $r_j=0.$ Let $j=m.$ It implies that there exists the sequence
$$
\Big\{\sum^k_{i=1}\alpha^n_i\1_{[0; t^n_i]}+\sum^s_{j=1}\beta^n_jf_j+\sum^{m-1}_{l=1}\gamma^n_le_l,\ n\geq1\Big\}
$$
such that
$$
\Big\|e_m-\sum^{k-1}_{i=1}\alpha^n_i\1_{[0; t^n_i]}-\sum^s_{j=1}\beta^n_jf_j-\sum^{m-1}_{l=1}\gamma^n_le_l\Big\|\to0,\  n\to\infty.
$$

Therefore, the question is when
$$
\sum^k_{i=1}\alpha^n_i\1_{[0; t^n_i]}+\sum^s_{j=1}\beta^n_jf_j+\sum^{m-1}_{l=1}\gamma^n_le_l+e_m
$$
tends to zero as $n\to\infty?$ Consider possible cases:

1) Suppose that
$$
\varlimsup_{n\to\infty}\Big\|\sum^{m-1}_{l=1}\gamma^n_le_l+e_m\Big\|<+\infty.
$$
Considering a subsequence assume that for $l=\ov{1, m-1} \ \gamma^n_l\to\gamma_l,\ n\to\infty.$ It implies that
$$
\sum^k_{i=1}\alpha^n_i\1_{[0; t^n_i]}+\sum^s_{j=1}\beta^n_jf_j\to -\sum^{m-1}_{l=1}\gamma_le_l-e_m,\ n\to\infty.
$$
Note that $\sum^{m-1}_{l=1}\gamma_le_l+e_m$ is not a step function. On other hand
$$
\Big\{\sum^k_{i=1}\alpha^n_i\1_{[0; t^n_i]}+\sum^s_{j=1}\beta^n_jf_j,\  n\geq1\Big\}
$$
is a sequence of step functions with the number of jumps less or equal to a fixed number. It follows from Lemma A.4 that the case 1) is impossible.
Let us check that the case 2) is also impossible. Considering a subsequence one can suppose that
$$
a_n=\|\sum^{m-1}_{l=1}\gamma^n_le_l+e_m\|\to+\infty,\ n\to\infty
$$
and
$$
\frac{1}{a_n}\Big(\sum^{m-1}_{l=1}\gamma^n_le_l+e_m\Big)\to\sum^{m-1}_{l=1}p_le_l,\  n\to\infty,
$$
where $\|\sum^{m-1}_{l=1}p_le_l\|=1.$ Using the same arguments as in the case 1) one can get a contradiction.
\end{proof}
\noindent
{\bf Lemma A.6.} \cite{10} {\it
Let $0\leq s_1<\ldots<s_N\leq1$ be the points of jumps of functions $f_1, \ldots, f_s.$ Then there exists a positive constant $c_{\vec{s}}$ which depends on $\vec{s}=(s_1,\ldots, s_N)$ such that the following relation holds
$$
G(\1_{[0;t_1]}, \ldots, \1_{[0; t_k]}, f_1, \ldots, f_s)\geq
$$
$$
\geq
c_{\vec{s}}\cdot
G(\1_{[0;t_1]}, \ldots, \1_{[0; t_k]}, \1_{[0;s_1]}, \ldots, \1_{[0; s_N]}).
$$
}
\begin{proof}
Note that for any $i=\ov{1,s},\ f_i\in LS\{\1_{[0; s_i]},\ i=\ov{1, N}\}.$ Let us prove the statement of the lemma by induction. Let $\alpha_{ik}$ be a distance from $\1_{[0; t_i]}$ to $LS\{\1_{[0; t_{i+1}]}, \ldots, \1_{[0; t_k]}, f_1, \ldots, f_s\}$ and $\rho_{ik}$ be a distance from
$\1_{[0; t_i]}$ to $LS\{\1_{[0; t_{i+1}]}, \ldots, \1_{[0; t_k]}, \1_{[0;s_1]}, \ldots, \1_{[0; s_N]}\}.$ Then for  $k=1$
$$
G(\1_{[0; t_1]}, f_1, \ldots, f_s)=d^2_{11}G(f_1, \ldots, f_s)=d^2_{11}\geq\rho^2_{11}\geq
$$
$$
\geq\rho^2_{11}G
\Big(
\frac{\1_{[0; s_1]}}{\sqrt{s_1}},
\frac{\1_{[s_1; s_2]}}{\sqrt{s_2-s_1}}, \ldots,
\frac{\1_{[s_{N-1}; s_N]}}{\sqrt{s_N-s_{N-1}}}
\Big)=
$$
$$
=c_{\vec{s}}\cdot p^2_{11}
G(\1_{[0; s_1]}, \1_{[s_1,s_2]}, \ldots, \1_{[s_{N-1}; s_N]})=
$$
$$
=c_{\vec{s}}\cdot p^2_{11}
G(\1_{[0; s_1]}, \1_{[0,s_2]}, \ldots, \1_{[0; s_N]})=
$$
$$
=c_{\vec{s}}\cdot
G(\1_{[0; t_1]}, \1_{[0,s_1]}, \ldots, \1_{[0; s_N]}).
$$
Assume that the statement of lemma holds for an arbitrary $k.$ Let us prove that
$$
G(\1_{[0; t_1]}, \ldots,\1_{[0; t_{k+1}]}, f_1, \ldots, f_s)\geq
$$
$$
\geq
c_{\vec{s}}\cdot
G(\1_{[0; t_1]}, \ldots, \1_{[0; t_{k+1}]}, \1_{[0; s_1]}, \ldots, \1_{[0; s_N]}).
$$
Really,
$$
G(\1_{[0; t_1]}, \ldots,\1_{[0; t_{k+1}]}, f_1, \ldots, f_s)=
$$
$$
=d^2_{1k+1}
G(\1_{[0; t_2]}, \ldots,\1_{[0; t_{k+1}]}, f_1, \ldots, f_s)\geq
$$
$$
\geq
c_{\vec{s}}\cdot
\rho^2_{1k+1}
G(\1_{[0; t_2]}, \ldots,\1_{[0; t_{k+1}]}, \1_{[0; s_1]}, \ldots, \1_{[0; s_N]})
$$
$$
=c_{\vec{s}}\cdot
G(\1_{[0; t_1]}, \ldots,\1_{[0; t_{k+1}]}, \1_{[0; s_1]}, \ldots, \1_{[0; s_N]}).
$$
\end{proof}

Lemma A.3 -- Lemma A.6 were proved in our work \cite{10} but for the sake of clarity we recalled all the main steps of proofs. It follows from Lemma A.3 -- Lemma A.6 that to finish the proof of Theorem A.2 one must check that
$$
\int_{\Delta_k}
\frac{d\vec{t}}
{\sqrt{G(\1_{[0; t_1]}, \ldots,\1_{[0; t_{k+1}]}, \1_{[0; s_1]}, \ldots, \1_{[0; s_N]})}}<+\infty.\eqno(A.14)
$$

Consider the following partition of the domain $\Delta_k$
$$
\Delta_k=\cup_{k=n_0+\ldots+n_N}I_{n_0\ldots n_N},
$$
where
$$
I_{n_0\ldots n_N}=\{0\leq t_1\ldots\leq t_{n_0}\leq s_1\leq t_{n_0+1}\ldots\leq t_{n_0+n_1}\leq s_2\leq\ldots\leq s_n\leq
$$
$$
\leq t_{n_0+\ldots+n_{N-1}+1}\leq\ldots\leq t_k\leq1\}.
$$
Note that
$$
I_{n_0\ldots n_N}=\Delta_{n_0}(0; s_1)\times\Delta_{n_1}(s_1; s_2)\times\ldots\times\Delta_{n_N}(s_N; 1).\eqno(A.15)
$$
It follows from (A.15) that to finish the proof it suffices to check that
$$
\int_{\Delta_k(s_1,s_2)}
\frac{d\vec{t}}
{\sqrt{(t_1-s_1)(t_2-t_1)\cdot\ldots\cdot(t_k-t_{k-1})(s_2-t_k)}}<+\infty.
$$
The change of variables in the integral implies the following relation
$$
\int_{\Delta_k(s_1,s_2)}
\frac{d\vec{t}}
{\sqrt{(t_1-s_1)(t_2-t_1)\cdot\ldots\cdot(t_k-t_{k-1})(s_2-t_k)}}=
$$
$$
=c(\vec{s},k)\int_{\Delta_k}
\frac{d\vec{t}}
{\sqrt{t_1(t_2-t_1)\cdot\ldots\cdot(t_k-t_{k-1})(1-t_k)}}=
$$
$$
=c(\vec{s},k)
\int_{\Delta_k}
\frac{d\vec{t}}
{\sqrt{G(\1_{[0;t_1]},\1_{[0;t_2]}, \ldots, \1_{[0;t_k]}, \1_{[0;1]})}}=
$$
$$
=c(\vec{s},k)
\int_{\Delta_k}
\frac{d\vec{t}}
{\sqrt{G((I-\wt{P})\1_{[0;t_1]}, \ldots, (I-\wt{P})\1_{[0;t_k]})}}, \eqno(A.16)
$$
where $c(\vec{s},k)$ is the positive constant which depends on $s_1,s_2,k$
and $\wt{P}$ is a projection onto linear span generated by $\1_{[0;1]}.$

One can check that (A.16) equals
$$
\frac{c(\vec{s},k)}{k!(2\pi)^{k/2}}
E(l^{\wt{w}})^k.
$$
Here
$
\wt{w}(t)=w(t)-tw(1), \ t\in[0; 1]
$
is the Brownian bridge. To finish the proof it suffices to check that
$$
\sup_{k\geq1}E(l^{\wt{w}})^k<+\infty.
$$
Really, note that
$$
E(l^{\wt{w}})^k=EE((l^{w})^k/w(1)=0).
$$
It is known that joint probability density of random variables $l^w$ and $w(1)$ has the following representation \cite{11}
$$
p(a,b)=\frac{1}{\sqrt{2\pi}}(|b|+a)e^{-\frac{1}{2}(|b|+a)^2},\ a>0,\ b\in\mbR.
$$
Then
$$
E((l^w)^k/w(1)=0)=
\frac{\int^{+\infty}_{0}y^kp(y,0)dy}
{\int^{+\infty}_{0}p(y,0)dy}=
$$
$$
=
\frac{\int^{+\infty}_{0}y^{k+1}e^{-\frac{y^2}{2}}dy}
{\int^{+\infty}_{0}ye^{-\frac{y^2}{2}}dy}=\Gamma\Big(\frac{k}{2}+1\Big),
$$
where
$$
\Gamma(z)=\int^{+\infty}_{0}t^{z-1}e^{-t}dt,\ z\in\mbC,\ \Re z>0
$$
is the gamma function.
One can check that
$$
\int_{\Delta_k(s_1,s_2)}
\frac{d\vec{t}}
{\sqrt{(t_1-s_1)(t_2-t_1)\cdot\ldots\cdot(t_k-t_{k-1})(s_2-t_k)}}=
$$
$$
\frac{1}{k!}(2\pi)^{\frac{k}{2}}E(l^{\wt{w}_{s_1,s_2}})^k,
$$
were
$$
\wt{w}_{s_1,s_2}(t)=w(t-s_1)-\frac{t-s_1}{s_2-s_1}w(s_2-s_1),\ t\in[s_1;s_2]
$$
and $w(t),\ t\in[0;1]$ is a 1-dimensional Wiener process.
\end{proof}
\section*{Appendix B. On some relations between generalized functionals}

In this appendix we discuss conditions on elements $r_j\in L_2([0; 1]),\  j=\ov{1, n-1}$ that allow to establish relation
$$
\int_{\mbR}\prod^n_{k=1}\delta_0((f_k, \xi)-u)du=\prod^{n-1}_{j=1}\delta_0((r_j,\xi)),
$$
which is understood as equality of generalized functionals from white noise \cite{12} and will be checked using Fourier--Wiener transform \cite{13}.

\noindent
{\bf Theorem B.1.} \cite{10} {\it
Let $f_1, \ldots, f_n$ be the linearly independent elements in $L_2([0; 1]).$ Then
$$
\int_{\mbR}\prod^n_{k=1}\delta_0((f_k, \xi)-u)du=\prod^{n-1}_{k=1}(f_{k+1}-f_k, \xi).\eqno(B.1)
$$
}

\begin{proof}
To prove the statement let us calculate the Fourier--Wiener transform of the left-hand side and the right-hand side of equality (B.1). Denote by $\cT(\alpha)(h)$ the Fourier--Wiener transform of random variable $\alpha.$
One can check that
$$
\cT(\prod^{n-1}_{j=1}\delta_0((r_j,\xi)))(h)=
$$
$$
=
\frac{1}
{(2\pi)^{\frac{n-1}{2}\sqrt{G(r_1, \ldots, r_{n-1})}}}
e^{-\frac{1}{2}\|P_{r_1\ldots r_{n-1}}h\|^2} \eqno(B.2)
$$
(see \cite{13, 14}). Let us find the Fourier--Wiener transform of
$$
\int_{\mbR}\prod^n_{k=1}\delta_0((f_k, \xi)-u)du.
$$
$$
\cT(\int_{\mbR}\prod^{n}_{k=1}\delta_0((f_k,\xi)-u)du)(h)=
$$
$$
=\int_{\mbR}
\frac{1}
{(2\pi)^
{\frac{n}{2}\sqrt{G(f_1, \ldots, f_{n})}
}
}
e^{-\frac{1}{2}(B^{-1}(f_1,\ldots,f_n)u\vec{e}-\vec{a},u\vec{e}-\vec{a})}du, \eqno(B.3)
$$
where
$$
\vec{e}=\begin{pmatrix}
1\\
\vdots\\
1
\end{pmatrix},
\ \
\vec{a}=\begin{pmatrix}
(f_1,h)\\
\vdots\\
(f_n,h)
\end{pmatrix}.
$$

By integrating (B.3) over $u$ one can get
$$
\frac{1}
{(2\pi)^{\frac{n-1}{2}\sqrt{G(f_1, \ldots, f_{n})(B^{-1}(f_1, \ldots, f_{n})e, e)}}
}
$$
$$
\exp\Big\{-\frac{1}{2}(B^{-1}(f_1,\ldots,f_n)a,a)-
$$
$$
-\frac{(B^{-1}(f_1,\ldots,f_n)a,e)^2}
{(B^{-1}(f_1,\ldots,f_n)e,e)}
)\Big\}. \eqno(B.4)
$$
It is  not difficult to check that
$$
(B^{-1}(f_1, \ldots, f_n)a,a)=\|P_{f_1\ldots f_n}h\|^2.
$$
Consider function $f\in LS\{f_1, \ldots, f_n\}$ such that $(f, f_k)=1,\  k=\ov{1,n}.$ Then
$$
(B^{-1}(f_1,\ldots, f_n)\vec{e}, \vec{e})=\|P_{f_1\ldots f_n}f\|^2=\|f\|^2,
$$
$$
(B^{-1}(f_1,\ldots, f_n)\vec{a}, \vec{e})=(P_{f_1\ldots f_n}h,f).
$$
Therefore, (B.4) equals
$$
\frac{1}
{(2\pi)^{\frac{n-1}{2}}\sqrt{G(f_1,\ldots,f_n)}\|f\|}
e^{-\frac{1}{2}(\|P_{f_1\ldots f_n}h\|^2-\|P_fP_{f_1\ldots f_n}h\|^2)}.
$$
Denote by
$$
f^\perp=\{v\in LS\{f_1,\ldots,f_n\}: (v,f)=0\}.
$$
Then
$$
\cT\Big(\int_{\mbR}\prod^{n}_{k=1}\delta_0((f_k,\xi)-u)du\Big)(h)=
$$
$$
=
\frac{1}
{(2\pi)^
{\frac{n-1}{2}\sqrt{G(f_1, \ldots, f_{n})}\|f\|
}
}
e^{-\frac{1}{2}\|P_{f^\perp}h\|^2}. \eqno(B.5)
$$
By comparing (B.2)--(B.5) we obtain the following conditions on  elements $r_k,\  k=\ov{1, n-1}$

1) $LS\{r_1,\ldots, r_{n-1}\}=f^\perp$

2) $G(r_1,\ldots,r_{n-1})=G(f_1,\ldots,f_n)\|f\|^2.$

Let us check that $r_j=f_{j+1}-f_j$ satisfies conditions 1), 2). Really, put $M=LS\{f_2-f_1, \ldots, f_n-f_{n-1}\}.$ Then $f\perp M.$ Denote by $r$ the distance from $f_1$ to $M.$ One can see that
$$
G(f_1,\ldots,f_n)=G(f_1, f_2-f_1, \ldots, f_n-f_{n-1})=
$$
$$
=r^2G( f_2-f_1, \ldots, f_n-f_{n-1}).
$$
Since
$$
\Big(f_1, \frac{f}{\|f\|}\Big)=\|f_1\|\cos\alpha=r,
$$
then $r=\frac{1}{\|f\|}.$ Consequently,
$$
\|f\|^2G(f_1,\ldots,f_{n-1})=G( f_2-f_1, \ldots, f_n-f_{n-1}).
$$
\end{proof}
Lemma B.1 was proved in \cite{10} but for clarity we recalled the proof.
\renewcommand{\refname}{References}

\end{document}